\def\journal@name{}
\numberwithin{equation}{section}
\newtheorem{theorem}{Theorem}[section]
\newtheorem*{theorem*}{Theorem}
\newtheorem{lemma}[theorem]{Lemma}
\newtheorem{corollary}[theorem]{Corollary}
\theoremstyle{definition}
\newtheorem{definition}[theorem]{Definition}
\newtheorem{remark}[theorem]{Remark}
\def\Z{\mathbb{Z}}
\def\F{\mathcal{F}}
\def\B{\mathcal{B}}
\def\LL{\mathcal{L}}
\def\U{\mathcal{U}}
\renewcommand{\phi}{\varphi}
\renewcommand{\epsilon}{\varepsilon}
\renewcommand{\1}{{\text{\Large $\mathfrak 1$}}}
\newcommand{\var}{\operatorname{var}}
\newcommand{\cov}{\operatorname{Cov}}
\newcommand{\til}{\widetilde}
\newcommand{\tmix}{t_{\mathrm{mix}}}
\newcommand{\tcov}{t_{\mathrm{cov}}}
\newcommand{\ball}[1]{\mathcal{B}(0,#1)}
\newcommand{\pr}[1]{\mathbb{P}\!\left(#1\right)}
\newcommand{\E}[1]{\mathbb{E}\!\left[#1\right]}
\newcommand{\estart}[2]{\mathbb{E}_{#2}\!\left[#1\right]}
\newcommand{\prcond}[3]{\mathbb{P}_{#3}\!\left(#1\;\middle\vert\;#2\right)}
\newcommand{\econd}[2]{\mathbb{E}\!\left[#1\;\middle\vert\;#2\right]}
\newcommand{\norm}[1]{\left\| #1 \right\|}
\newcommand{\tn}{|\kern-.1em|\kern-0.1em|}
\newcommand{\Fexc}{\F_{\rm{exc}}}
\newcommand{\Fout}{\F_{\rm{out}}}
\newcommand{\crw}{C_d}
\begin{document}

\begin{frontmatter}
\title{Chen--Stein Method for the Uncovered Set\\of Random Walk on $\Z_{\lowercase{n}}^{\lowercase{d}}$ for $\lowercase{d} \ge 3$}
\runtitle{Uncovered set of random walk}
\begin{aug}

\author{Sam Olesker-Taylor}
and
\author{Perla Sousi}

\runauthor{Sam Olesker-Taylor and Perla Sousi}

\address{
	Sam Olesker-Taylor\\
	University of Cambridge\\
	Cambridge, UK\\
	sam.ot@posteo.co.uk%
}

\address{
Perla Sousi\\
University of Cambridge\\
Cambridge, UK\\
p.sousi@statslab.cam.ac.uk%
}

\affiliation{University of Cambridge}
\end{aug}

\begin{abstract}
Let $X$ be a simple random walk on $\Z_n^d$ with $d\geq 3$ and let $\tcov$ be the expected cover time. We consider the set  $\U_\alpha$ of points of $\Z_n^d$ that have not been visited by the walk by time $\alpha \tcov$ for $\alpha\in (0,1)$. It was shown in~\cite{JasonPerla} that there exists $\alpha_1(d)\in (0,1)$ such that for all $\alpha>\alpha_1(d)$ the total variation distance between the law of the set $\U_\alpha$ and an i.i.d.\ sequence of Bernoulli random variables indexed by $\Z_n^d$ with success probability $n^{-\alpha d}$ tends to $0$ as $n \to \infty$. In~\cite{JasonPerla} the constant $\alpha_1(d)$ converges to $1$ as $d\to\infty$. In this short note using the Chen--Stein method and a concentration result for Markov chains of Lezaud~\cite{Lezaud} we greatly simplify the proof of~\cite{JasonPerla} and find a constant $\alpha_1(d)$ which converges to $3/4$ as $d\to\infty$. We prove analogous results for the high points of the Gaussian free field. 
\end{abstract}

\thispagestyle{empty}

\setattribute{keyword}{AMS}{AMS 2010 subject classifications:}
\begin{keyword}[class=AMS]
\kwd[Primary ]{60G50, 60J10, 82C41} 
\end{keyword}

\begin{keyword}
\kwd{Random walk, hitting time, uncovered set, Chen--Stein method.}
\end{keyword}

\setattribute{keyword}{note}{Note:}
\begin{keyword}[class=note]
	This article was originally published under the first named author's previous name, ``Sam Thomas''
\end{keyword}

\date{\today}

\end{frontmatter}

\maketitle

\section{Introduction}
\label{sec:intro}

Let $X$ be a simple random walk on the torus $\Z_n^d$ with $d \geq 3$ started from its stationary distribution.  
For each~$x \in \Z_n^d$ we let 
\[\tau_x = \min\{t \geq 0 : X(t) = x\}\]
be the first time that $X$ visits $x$.  For~$t\geq 0$ we define the process
$(U_x(t))_x$ and the \emph{uncovered} set $\U(t)$ respectively by
\[U_x(t) = \1(\tau_x>t) \quad \text{for}\quad x\in 
\Z_n^d \quad \text{and} \quad
\U(t) = \{ x \in \Z_n^d : U_x(t) = 1\}.\]
The expected cover time $\tcov$ is given by 
\[\textstyle
\tcov = \max_x\estart{\max_y\tau_y}{x},
\]
where we write $\mathbb{E}_x$ and $\mathbb{P}_x$ to indicate that the random walk  starts from $x$. We write $\mathbb{E}$ and~$\mathbb{P}$ when $X$ starts from stationarity. 

We recall that the total variation distance between two measures $\mu$ and $\nu$ is given by 
\[
\norm{\mu-\nu}_{\rm{TV}}= \max_{A\subseteq \Z_n^d} |\mu(A)-\nu(A)|.
\]
For any $\alpha>0$ let $p_{\alpha,n}\in (0,1)$ be a parameter to be defined precisely later (in the proof of Theorem~\ref{thm:mainresult}) which satisfies 
\[
p_{\alpha,n}=n^{-\alpha d}(1+o(1)).
\]
Let $t_*$ be a time to be defined precisely later (see~\eqref{eq:defoftstar}) which satisfies
\[
t_* = \tcov \cdot (1+o(1)).
\]
Finally let $\nu_{\alpha,n}$ be the law of $\{x\in \Z_n^d: Z_x=1\}$ where $(Z_x)_x$ is an i.i.d.\ sequence of Bernoulli random variables with parameter $p_{\alpha, n}$. 
The following theorem was shown in~\cite{JasonPerla}.

\begin{theorem}[{\cite[Theorem~1.1]{JasonPerla}}]\label{thm:perlajason}
	For all $d\geq 3$, there exist $0<\alpha_0(d)<\alpha_1(d)<1$ so that for all $\alpha<\alpha_0(d)$ 
\[
	\norm{\LL(\U(\alpha t_*)) - \nu_{\alpha,n} }_{\rm{TV}} = 1-o(1) \ \text{ as } n\to\infty,
	\]
while for all $\alpha>\alpha_1(d)$
\begin{align*}\label{eq:alpha1}
	\norm{\LL(\U(\alpha t_*)) - \nu_{\alpha,n} }_{\rm{TV}} =o(1) \ \text{ as } n\to\infty.
	\end{align*}
\end{theorem}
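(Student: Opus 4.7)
The lower bound (for $\alpha<\alpha_0(d)$) would follow from a moment argument: for $\alpha$ small, clumping of uncovered sites around local neighbours forces $\var(|\U(\alpha t_*)|)$ to be substantially larger than its value under $\nu_{\alpha,n}$, so that an event such as ``there exist two uncovered nearest neighbours'' has probability separated under the two laws, which forces $\norm{\LL(\U(\alpha t_*))-\nu_{\alpha,n}}_{\rm TV}=1-o(1)$.

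The upper bound ($\alpha>\alpha_1(d)$) is the novel part and would proceed via the process-level Chen--Stein (Arratia--Goldstein--Gordon) coupling bound applied to the family of indicators $(U_x(\alpha t_*))_{x\in\Z_n^d}$. Choosing dependency neighbourhoods $B_x:=B(x,r_n)$ with $r_n=n^\beta$ for some $\beta\in(0,1)$ to be optimised, the bound takes the form
\[
\norm{\LL(\U(\alpha t_*))-\nu_{\alpha,n}}_{\rm TV}\le b_1+b_2+b_3+o(1),
\]
where, writing $p_x=\pr{U_x=1}$, $b_1=\sum_x\sum_{y\in B_x}p_xp_y$, $b_2=\sum_x\sum_{y\in B_x\setminus\{x\}}\pr{U_x=U_y=1}$, and $b_3$ measures the dependence of $U_x$ on the trajectory restricted to $\Z_n^d\setminus B_x$.

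The preliminary step is to show $p_x=n^{-\alpha d}(1+o(1))$ uniformly in $x$, via an eigenvalue/quasi-stationary expansion for the law of $\tau_x$ together with the Green's function asymptotics $G(x,x)=c_d+o(1)$ on $\Z^d$ for $d\ge 3$; this also pins down the precise form of $t_*$. Then $b_1\asymp n^{d(1+\beta-2\alpha)}$ is $o(1)$ whenever $\beta<2\alpha-1$. For $b_2$ I would combine the trivial bound $\pr{U_x=U_y=1}\le p_x$ on close pairs with sharper joint-hitting asymptotics $\pr{\tau_x>t,\tau_y>t}\approx p_xp_y(1+O(|x-y|^{-(d-2)}))$ at intermediate distances, controlling the sum via the Green's function decay. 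For $b_3$, Lezaud's concentration inequality for reversible Markov chains~\cite{Lezaud} yields exponential tails for the number of excursions of the walk into $B_x$ during $[0,\alpha t_*]$, which allows $\pr{U_x=1\mid \sigma(\text{walk outside } B_x)}$ to be written as a product over excursions of (essentially) the right marginal probability with negligible fluctuation.

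\textbf{Main obstacle.} The delicate point is the simultaneous optimisation of $r_n$: small $r_n$ is needed to control $b_1$, while large $r_n$ is needed for both the clump part of $b_2$ and the mixing term $b_3$. The improvement from $\alpha_1(d)\to 1$ in~\cite{JasonPerla} to $\alpha_1(d)\to 3/4$ comes from the fact that Chen--Stein converts pairwise correlations directly into a total-variation bound, avoiding the higher-moment/union-bound machinery of the previous proof, with Lezaud's inequality providing the sharp mixing control for $b_3$; the threshold $3/4$ emerges as the balance between $\beta=2\alpha-1$ in $b_1$ and the clump contribution in $b_2$ in the limit $d\to\infty$.
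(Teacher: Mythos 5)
Your overall architecture (lower bound by a neighbouring-pair statistic, upper bound by Chen--Stein with dependency balls of radius roughly $n^{2\alpha-1}$ plus Lezaud) matches the paper in spirit, and your $b_1$ computation is the paper's. But there are two genuine gaps in the upper-bound part. First, you apply Chen--Stein directly to the time-indexed indicators $U_x(\alpha t_*)$ and assert that $\pr{U_x=1\mid \sigma(\text{walk outside }B_x)}$ factorises over excursions up to negligible error; this step would fail as stated, because the indicator is tied to the fixed time horizon $\alpha t_*$, and conditioning on the trajectory (or on the $U_y$, $y\notin B_x$) outside $B_x$ carries information about how much of the time budget was spent outside, so the conditional law of the inside excursions is not a clean product. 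The paper's key idea, which your plan is missing, is to change the field before applying Chen--Stein: replace $U_x(\alpha t_*)$ by $Q_x=\1(\til{\tau}_x>\sigma_x)$, where $\sigma_x$ is the time at which exactly $A$ excursions across the annulus $\B(x,R)\setminus\B(x,r)$ have been completed, prove via concentration of excursion counts (Lemma~\ref{lem:n1}) that $\overline{\U}=\{Q_x=1\}$ coincides with $\U(\alpha t_*)$ with high probability, and only then run Chen--Stein on $\overline{\U}$. With this definition one has, by the tower property, $\econd{Q_0}{\Fout}=\econd{\econd{Q_0}{\Fexc}}{\Fout}$ and $\econd{Q_0}{\Fexc}=\prod_{i=1}^{A}f(Y_{i-1},Y_i)$, and Lezaud's inequality is applied to the additive functional $\sum_i\log f(Y_{i-1},Y_i)$ of the exit-point Markov chain (not merely to the number of excursions) to show this conditional probability is essentially deterministic, which is exactly what makes $b_3$ small.

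Second, your treatment of $b_2$ on close pairs does not work: the ``trivial bound'' $\pr{U_x=U_y=1}\le p_x\asymp n^{-\alpha d}$ gives, already for nearest-neighbour pairs, a contribution of order $n^{d(1-\alpha)}\to\infty$ since $\alpha<1$. One genuinely needs a two-point hitting estimate: per excursion the walk hits $\{x,y\}$ with probability at least $\tfrac{2}{1+p_d}\,C_d r^{2-d}(1+o(1))$ (Lemma~\ref{lem:2pointestimate}), which yields $\E{Q_xQ_y}\lesssim n^{-2\alpha d/(1+p_d)+o(1)}$ and is precisely where the hypothesis $\alpha>(1+p_d)/2$ enters; intermediate-distance pairs are then decoupled through smaller annuli around each of $x$ and $y$, not by a correction of the form $p_xp_y(1+O(\|x-y\|^{2-d}))$. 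Relatedly, your account of where the threshold comes from is off: in the paper $b_1$ and $b_2$ only force $\gamma<2\alpha-1$ and $\alpha>(1+p_d)/2$, and the constraint $\alpha>\tfrac34(d-\tfrac23)/(d-1)$ is dictated by $b_3$ (the fluctuation $n^{-\alpha d}\,m\eta A\asymp n^{-\alpha d}n^{-\gamma(d-2)/2}$ summed over $n^d$ sites) together with the coupling error $n^{d-\alpha d}\delta\log n$ in Lemma~\ref{lem:comparison}, not by balancing $b_1$ against the clump part of $b_2$. Finally, note that the displayed statement is \cite[Theorem~1.1]{JasonPerla}: the present paper does not reprove the $\alpha<\alpha_0(d)$ direction (your sketch of it matches the cited argument), and its own contribution is the improved $\alpha_1(d)$ via the $Q_x$ construction above.
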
 
	The existence of $\alpha_1(d)$ was the main challenge in~\cite{JasonPerla}, while the existence of $\alpha_0(d)$ followed by counting the number of neighbouring points in the uncovered set. In~\cite[Question~1]{JasonPerla} the authors ask whether there is a phase transition, i.e.\ whether $\alpha_0(d)$ and~$\alpha_1(d)$ can be chosen to be equal. They obtained $\alpha_0(d) = (1+p_d)/2$, where $p_d$ is the return probability to $0$ for simple random walk on $\Z^d$, while their constant $\alpha_1(d)\to 1$ as $d\to\infty$. 
	
	Our contribution in the present paper is to give a much simpler proof of the existence of the constant $\alpha_1(d)$ and moreover to show that $\alpha_1(d)$ can be chosen to be bounded away from $1$ as $d\to\infty$ as the following theorem shows.

\begin{theorem}\label{thm:mainresult}

	For all $d\geq 3$, if $\alpha> \frac34 (d-\frac23)/(d-1)$, then
	\[
	\norm{\LL(\U(\alpha t_*)) - \nu_{\alpha,n} }_{\rm{TV}} = o(1) \ \text{ as } n\to\infty.
	\]
\end{theorem}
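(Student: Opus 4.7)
The plan is to apply the Chen--Stein method to compare the Bernoulli field $(U_x(\alpha t_*))_x$ with the i.i.d.\ product~$\nu_{\alpha,n}$. By stationarity and by the symmetry of~$\Z_n^d$, the marginal $\mathbb{P}(U_x(\alpha t_*)=1)$ does not depend on~$x$, and one defines $p_{\alpha,n}$ to be this common value; the asymptotic $p_{\alpha,n}=n^{-\alpha d}(1+o(1))$ then follows from the standard exponential approximation $\mathbb{P}(\tau_x>t)=\exp(-t\,\mathrm{cap}(\{x\})/n^d)(1+o(1))$ for the hitting time of a single point on the torus, together with the choice $t_*=\tcov(1+o(1))$. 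Matching of marginals being automatic, the task reduces to controlling the dependence structure of the field.

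For the main inequality I would invoke a Chen--Stein bound of Arratia--Goldstein--Gordon type (or its process-level analogue due to Barbour--Brown), which for any choice of a dependency neighbourhood $B_x\ni x$ gives
\[
\|\LL(\U(\alpha t_*))-\nu_{\alpha,n}\|_{\rm{TV}}\leq b_1+b_2+b_3,
\]
with $b_1=\sum_x\sum_{y\in B_x} p_{\alpha,n}^2$, $b_2=\sum_x\sum_{y\in B_x\setminus\{x\}}\mathbb{P}(U_x=U_y=1)$, and $b_3=\sum_x\mathbb{E}\bigl|\mathbb{P}(U_x=1\mid(U_y)_{y\notin B_x})-p_{\alpha,n}\bigr|$. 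I take $B_x$ to be the ball of radius~$r$ about~$x$, with $r$ polynomial in~$n$ to be optimised. Then $b_1$ is immediate, of order $n^d r^d n^{-2\alpha d}$. For $b_2$ the key input is the exponential approximation for small hitting sets, $\mathbb{P}(\tau_A>t)=\exp(-t\,\mathrm{cap}(A)/n^d)(1+o(1))$, together with the capacity identity $\mathrm{cap}_{\Z^d}(\{x,y\})=2/(G(0,0)+G(x,y))$, valid to leading order for $x,y$ at distance much less than~$n$. This yields $\mathbb{P}(U_x=U_y=1)\leq p_{\alpha,n}^{2/(1+h(y-x))}(1+o(1))$, where $h(z)=G(0,z)/G(0,0)=\mathbb{P}_0(\tau_z<\infty)$. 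For $b_3$, Lezaud's concentration inequality for occupation measures of reversible chains, combined with the $O(n^{-2})$ spectral gap of simple random walk on $\Z_n^d$, bounds the effect of conditioning on far indicators by a super-polynomially small quantity, so that $b_3=o(1)$ as soon as $r$ grows at least poly-logarithmically in~$n$.

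The main obstacle lies in the sharp balancing inside $b_2$. Writing $\lambda=\alpha d\log n=\log(1/p_{\alpha,n})$, the shell at distance $|z|$ contributes of order $n^d |z|^{d-1}\exp\bigl(-2\lambda/(1+h(z))\bigr)$; since $h(z)$ is largest close to the origin (with $h$ at a unit neighbour equal to the return probability~$p_d$) and decays like $c_d |z|^{2-d}$ at infinity, the dangerous shells are the near ones, where the exponent $2/(1+h(z))$ dips most below~$2$. The dimension-dependent threshold $\alpha>\tfrac{3}{4}(d-\tfrac{2}{3})/(d-1)$ is precisely the critical value at which the resulting sum is $o(1)$; the limiting value $3/4$ as $d\to\infty$ reflects that $h$ concentrates on $|z|=O(1)$ in high dimension, so that the near-neighbour contribution dominates. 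The strength of the present route compared with~\cite{JasonPerla} is that Lezaud's concentration keeps the multiplicative error in the exponential approximation for $\mathbb{P}(\tau_A>t)$ negligibly small uniformly across the scales that appear in~$b_2$, thereby avoiding the logarithmic losses that in~\cite{JasonPerla} forced $\alpha_1(d)\to 1$.
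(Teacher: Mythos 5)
Your route—applying Chen--Stein directly to $(U_x(\alpha t_*))_x$—founders on the term $b_3$, and this is precisely where the difficulty and the threshold $\tfrac34(d-\tfrac23)/(d-1)$ actually live. Your claim that Lezaud's inequality together with the $O(n^{-2})$ spectral gap of the walk makes the effect of conditioning on the far indicators ``super-polynomially small'' already for poly-logarithmic neighbourhoods is unsupported, and it cannot be correct: if it were, the theorem would hold for all $\alpha>(1+p_d)/2$, since (as you note) $b_1$ and $b_2$ need nothing stronger, and then the constant $\tfrac34(d-\tfrac23)/(d-1)$ would play no role at all. The far indicators do carry polynomially significant information about $U_x$: they are correlated with how much of the time budget $\alpha t_*$ the walk has devoted to excursions across an annulus around $x$, and the relative fluctuation of the conditional probability that $x$ is uncovered, given this excursion data, is of CLT size $r^{-(d-2)/2}$ (up to logarithms) for annuli of inner radius $r$ — polynomially, not super-polynomially, small. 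The paper makes this tractable by replacing $\U(\alpha t_*)$ with a surrogate set $\overline{\U}$ in which each $x$ is required to stay uncovered for a \emph{deterministic} number $A$ of excursions across $\B(x,n^{\gamma})\setminus\B(x,n^{\gamma(1-\epsilon)})$; the two sets are coupled using concentration of excursion counts, and then, conditionally on the exit points of the excursions, the non-hitting events in distinct excursions are independent, so $\econd{Q_0}{\Fexc}$ is a product of $A$ factors and Lezaud is applied to the exit-point chain $(Y_{i-1},Y_i)$, not to the walk itself. This yields $b_3\lesssim n^{d-\alpha d}\,n^{-\gamma(d-2)/2+o(1)}$, while $b_1$ forces the neighbourhood radius $n^{\gamma}$ to satisfy $\gamma<2\alpha-1$; balancing these two constraints is exactly what produces $\alpha>\tfrac34(d-\tfrac23)/(d-1)$. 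Without some analogue of this excursion construction you have no handle on $\pr{U_x=1\mid (U_y)_{y\notin \B_x}}$, and with poly-logarithmic neighbourhoods the bound above would make $b_3$ enormous.

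Relatedly, your account of $b_2$ misattributes the threshold. The shell-by-shell sum you describe is $o(1)$ as soon as $\alpha>(1+p_d)/2$ (which controls the near shells, where the two-point non-hitting probability is about $n^{-2\alpha d/(1+p_d)}$) and the neighbourhood radius is below $n^{2\alpha-1}$ (which controls the far shells); as $d\to\infty$ this condition tends to $1/2$, not $3/4$, so the ``sharp balancing inside $b_2$'' cannot be the origin of $\tfrac34(d-\tfrac23)/(d-1)$. In the paper the condition $\alpha>\tfrac34(d-\tfrac23)/(d-1)$ enters only through $b_3$ and through the coupling error $\pr{\U(\alpha t_*)\neq\overline{\U}}$, both of which scale like $n^{d-\alpha d-(2\alpha-1)(d-2)/2+o(1)}$; the two-point estimate (via Lemma~\ref{lem:2pointestimate}) is needed only at the weaker level you would expect. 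Your use of the exponential hitting-time approximation with the capacity of a two-point set is a reasonable alternative to the paper's excursion-based bound for $b_2$, and defining $p_{\alpha,n}$ as the exact marginal is legitimate in principle (the paper instead takes $p_{\alpha,n}=\E{Q_0}$ for the surrogate set), but neither of these repairs the missing argument for $b_3$.
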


In~\cite{JasonPerla} as a corollary of Theorem~\ref{thm:perlajason} it was shown that the same uniformity statement holds when we wait for the first time the uncovered set contains $n^{d-\alpha d}$ points. Using our improved bound on $\alpha_1(d)$ one can use exactly the same proof as in~\cite{JasonPerla} to obtain the same result for this larger range of $\alpha$.

We prove Theorem~\ref{thm:mainresult} in Section~\ref{sec:totalvar} using the Chen--Stein method. Using the same method we also prove an analogous result for the high points of the Gaussian free field that we explain now.  

Let $(\phi_x)$ be the discrete Gaussian free field on $[0,n]^d\cap \Z^d$ with $0$ boundary condition on~$\partial [0,n]^d$. That is, $\phi$ is a zero mean Gaussian process indexed by the vertices of $[0,n]^d\cap \Z^d$ with covariance 
\[\textstyle
\cov(\phi_x,\phi_y) = \estart{\sum_{i=0}^{\tau}\1(X_i=y)}{x},
\]
where $X$ is a simple random walk on $[0,n]^d\cap \Z^d$ and $\tau$ is the first time it hits $\partial [0,n]^d\cap \Z^d$.
Fix $\delta\in (0,1)$. For each $\alpha\in (0,1)$ consider the set of $\alpha$-high points 
\[
\mathcal H_\alpha = \left\{x\in [\delta n, (1-\delta)n]^d\cap \Z^d: \phi_x \geq \sqrt{2\alpha d G(0)\log n}\right\},
\]
where $G$ stands for the Green's function for simple random walk on $\Z^d$ and we write $G(0)$ for~$G(0,0)$. 
Finally, let $\til{\nu}_{\alpha,n}$ be the law of $\{x\in [\delta n, (1-\delta)n]^d\cap \Z^d: \til{Z}_x=1\}$, where $(\til{Z}_x)_x$ is an independent sequence of Bernoulli random variables satisfying for all $x\in  [\delta n, (1-\delta)n]^d\cap \Z^d$ 
\[
\pr{\til{Z}_x=1} = \pr{\phi_x \geq \sqrt{2\alpha d G(0)\log n}}.
 \]

\begin{theorem}\label{thm:gff}
For all $d\geq 3$, if $\alpha>\tfrac{3}{4}(d-\tfrac{2}{3})/(d-1)$, then 
\[
\norm{\LL(\mathcal{H}_\alpha) - \til{\nu}_{\alpha,n} }_{\rm{TV}} = o(1) \ \text{ as } n\to\infty.
\]	
\end{theorem}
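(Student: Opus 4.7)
The plan is to mirror the Chen--Stein approach used in the proof of Theorem~\ref{thm:mainresult}, with the Markov chain concentration bound of Lezaud replaced by classical Gaussian tail estimates and the domain Markov property of the GFF. The key random variables are the high-point indicators $I_x := \1\{\phi_x \geq h_n\}$, $x \in [\delta n, (1-\delta)n]^d \cap \Z^d$, with $h_n := \sqrt{2\alpha d G(0)\log n}$.

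First I would establish the marginal estimate
\[
p_x := \pr{\phi_x \geq h_n} = n^{-\alpha d + o(1)}
\]
uniformly for $x$ in the bulk, using the Mills ratio together with $\var(\phi_x) = G(0) + o(1)$ (a standard Green's function comparison between the walk killed on $\partial[0,n]^d$ and simple random walk on $\Z^d$). The expected number of high points is then $\lambda := \sum_x p_x = n^{d(1-\alpha)+o(1)}$.

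Next, take the dependence neighbourhood $B_x := \{y : \norm{y-x}_\infty \leq R\}$ for a polynomial radius $R = R(n)$ to be tuned. The Chen--Stein point process approximation controls the TV distance between $\LL(\mathcal{H}_\alpha)$ and the Poisson point process with intensity $(p_x)$ by $b_1 + b_2 + b_3$, where
\[
b_1 = \sum_x \sum_{y \in B_x} p_x p_y, \qquad b_2 = \sum_x \sum_{y \in B_x \setminus \{x\}} \pr{\phi_x \geq h_n,\, \phi_y \geq h_n},
\]
and $b_3 = \sum_x \mathbb{E}\bigl[\bigl|\pr{\phi_x \geq h_n \mid \F_{B_x^c}} - p_x\bigr|\bigr]$ with $\F_{B_x^c} := \sigma(\phi_y : y \notin B_x)$. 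A further $O(\sum_x p_x^2) = O(n^{d(1-2\alpha)})$ term converts the Poisson intensity to the Bernoulli$(p_x)$ product measure $\til\nu_{\alpha,n}$, and is $o(1)$ since $\alpha > 1/2$ is implied by our hypothesis. The required inputs are: the marginal estimate above; the joint Gaussian tail
\[
\pr{\phi_x \geq h_n,\, \phi_y \geq h_n} \leq n^{-2\alpha d/(1+\rho(x,y)) + o(1)},
\]
where $\rho(x,y) = G_{[0,n]^d}(x,y)/\sqrt{G_{[0,n]^d}(x,x)\, G_{[0,n]^d}(y,y)} \sim c_d|x-y|^{-(d-2)}$, obtained by estimating $\pr{\phi_x + \phi_y \geq 2h_n}$; and for $b_3$, the domain Markov property of the GFF, which gives that conditional on $\F_{B_x^c}$, $\phi_x$ is Gaussian with mean equal to the discrete-harmonic extension of the boundary data to $x$ and variance $G_{B_x}(x,x) = G(0) - O(R^{-(d-2)})$.

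The bounds on $b_1$ and $b_3$ are routine once $R$ is chosen as a suitable polynomial in $n$: $b_1 = O(n^d R^d n^{-2\alpha d})$ forces $R \leq n^{\alpha - \epsilon}$, while smoothness of the Gaussian tail combined with $\var(H_x) = O(R^{-(d-2)})$ for the conditional mean $H_x$ gives $b_3 = o(1)$ whenever $R \geq n^\epsilon$ for some small $\epsilon > 0$. The main obstacle, exactly as in Theorem~\ref{thm:mainresult}, is bounding $b_2$: decomposing the inner sum over spherical shells $\{y : |y-x| = r\}$ of cardinality $\Theta(r^{d-1})$, one must control
\[
\sum_{r=1}^R r^{d-1}\, n^{-2\alpha d/(1+c_d r^{-(d-2)})},
\]
balancing a near regime in which $\rho(x,y)$ is bounded away from $0$ (and the trivial bound $\pr{\phi_x \geq h_n,\phi_y \geq h_n} \leq p_x$ applied to the $O(r^{d-1})$ pairs is efficient) against a far regime in which $1/(1+\rho) \approx 1-\rho$ pays an extra factor $n^{2\alpha d \rho(r)}$. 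A careful optimisation of this tradeoff together with the $b_1$ and $b_3$ constraints produces the same threshold $\alpha > \tfrac{3}{4}(d-\tfrac{2}{3})/(d-1)$ as in the uncovered-set problem, reflecting the common polynomial decay $\sim |x-y|^{-(d-2)}$ shared by the Green's function, which controls the GFF covariance here and the joint hitting correlations there.
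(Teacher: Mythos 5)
Your overall framework (Chen--Stein with the domain Markov property of the GFF and Gaussian tail estimates) is the same as the paper's, but the quantitative accounting contains a genuine error in where the threshold comes from, and as written the argument would not establish it. The binding constraint is $b_3$, not $b_2$. Conditionally on $\sigma(\phi_y: y\notin B_x)$, $\phi_x$ is Gaussian with variance $G(0)-O(R^{-(d-2)})$ and a random mean $\xi_x$ of standard deviation $O(R^{-(d-2)/2})$; shifting the level $h_n\asymp\sqrt{\log n}$ by $\xi_x$ perturbs the tail probability by roughly $|\xi_x|\,h_n\,p_x$, so the per-site error is of order $p_x R^{-(d-2)/2}$ up to logarithms, not something that vanishes as soon as $R\geq n^{\epsilon}$. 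Summing over the $n^{d}$ sites gives $b_3\approx n^{d(1-\alpha)}R^{-(d-2)/2}$, which forces $R\geq n^{2d(1-\alpha)/(d-2)+\epsilon}$, a polynomially large radius with a specific exponent. Your $b_1$ constraint is also miscomputed: $n^{d}R^{d}n^{-2\alpha d}=o(1)$ forces $R\leq n^{2\alpha-1-\epsilon}$ (note $2\alpha-1<\alpha$ since $\alpha<1$), not $R\leq n^{\alpha-\epsilon}$; with $R=n^{\alpha-\epsilon}$ one has $b_1=n^{d(1-\alpha-\epsilon)}\to\infty$. The theorem's threshold is exactly the compatibility of these two requirements: $2\alpha-1>2d(1-\alpha)/(d-2)$ if and only if $\alpha>\tfrac34(d-\tfrac23)/(d-1)$, and this is how the paper proceeds, taking $\gamma=2\alpha-1-\epsilon$, $R=n^{\gamma}$, and showing $b_3\lesssim n^{d}n^{-\alpha d}n^{-\gamma(d-2)/2}$.

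By contrast, $b_2$ is not the main obstacle, and the shell-sum optimisation you defer to would not produce the stated threshold. For pairs with $\|x-y\|\leq n^{\zeta}$ one uses the uniform bound on the correlation, $\rho(x,y)\leq p_d+o(1)$ (since $G(0,z)\leq p_d G(0)$ for all $z\neq 0$), so this range contributes at most $n^{d+\zeta d}n^{-2\alpha d/(1+p_d)+o(1)}$, which is $o(1)$ for $\zeta$ small as soon as $\alpha>(1+p_d)/2$ --- a condition strictly weaker than the hypothesis of the theorem; for $\|x-y\|\geq n^{\zeta}$ one has $\rho(x,y)=O(n^{-\zeta(d-2)})$, so $n^{2\alpha d\rho}=1+o(1)$ and this range contributes the same order as $b_1$. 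Since your write-up asserts, without carrying it out, that a ``careful optimisation'' of $b_2$ yields $\alpha>\tfrac34(d-\tfrac23)/(d-1)$ (it does not), while dismissing $b_3$ as routine for any $R\geq n^{\epsilon}$ (it is the term that actually dictates the threshold), the proof as proposed has a real gap; repairing it amounts to redoing the $b_1$/$b_3$ trade-off as above, after which the $b_2$ estimate you outline is indeed sufficient.
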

\begin{remark}
	\rm{
	We remark that for all $\alpha<\alpha_0(d)$, where $\alpha_0(d)=(1+p_d)/2$ as above, 
	\[
	\norm{\LL(\mathcal{H}_\alpha) - \til{\nu}_{\alpha,n}}_{\rm{TV}} = 1-o(1) \ \text{ as } n\to\infty.
	\]
	Indeed, this follows by using exactly the same approach as the proof of the lower bound of the proof of Theorem~1.1 of~\cite{JasonPerla}. In particular, one shows that the number of neighbouring points contained in a set is a distinguishing statistic for the two random subsets of $\Z_n^d$. 
	}
\end{remark}

We prove Theorem~\ref{thm:gff} in Section~\ref{sec:gff} where we also recall the statement of the Chen--Stein method. The proof of this theorem serves as a warm-up for the proof of Theorem~\ref{thm:mainresult}.

A statement similar to Theorem~\ref{thm:gff} in the case $\alpha=1$ has been established in a recent work by~\cite{ChiariniCiprianiSubhra} using also the Chen--Stein method. In many instances, an approach in the study of GFF can be used to study random walks, or vice versa.

\emph{Notation.}
For functions $f$ and $g$ we write $f(n) \lesssim g(n)$ if there exists a constant $c > 0$ such that $f(n) \leq c g(n)$ for all $n$.  We write $f(n) \gtrsim g(n)$ if $g(n) \lesssim f(n)$.  Finally, we write $f(n) \asymp g(n)$ if both $f(n) \lesssim g(n)$ and $f(n) \gtrsim g(n)$.  

\section{High points of the GFF}\label{sec:gff}

We start this section by recalling the statement of the Chen--Stein method~\cite[Theorem~3]{Arratia}, see also~\cite{Chen, Stein}.

\begin{theorem}[Chen--Stein]\label{thm:chenstein}
Let $I$ be a finite set of indices and let $(X_t)_{t\in I}$ and $(X'_t)_{t\in I}$ be two processes such that $X_t$ has the same distribution as $X'_t$ for all $t\in I$ and $X'$ is an independent Bernoulli process. For every $t\in I$ we choose a set $\B_t\subseteq I$ with $t\in \B_t$ and call it the neighbourhood of $t$. Writing $p_t = \pr{X_t=1}$ and $p_{rs}=\pr{X_r=1, X_s =1}$ we define
\begin{align*}
	b_1 = \sum_{t} \sum_{s \in \B_t} p_t p_s,
\quad
	b_2 = \sum_{t} \sum_{s\in \B_t\setminus\{t\}} p_{st}
\quad \text{and}\quad 
	b_3 = \sum_{t} \E{\left|\econd{X_t - p_t}{X_s, s \notin \B_t} \right|}.
\end{align*}
Then we have 
\[
\norm{\LL((X_t)_{t\in I}) - \LL((X'_t)_{t\in I})}_{\rm{TV}} \leq 8(b_1 + b_2 + b_3).
\]	
\end{theorem}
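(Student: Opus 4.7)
The bound is the classical Chen--Stein estimate of Arratia, Goldstein and Gordon, and I would prove it by Stein's method in generator form. Let $\nu = \bigotimes_{t\in I}\mathrm{Ber}(p_t)$ be the law of $X'$, and define the generator $L$ of the continuous-time Markov chain on $\{0,1\}^I$ that resamples each coordinate $t$ at rate $1$ from $\mathrm{Ber}(p_t)$:
\[
(Lg)(\xi) = \sum_{t\in I}\bigl(p_t\, g(\xi^{t,1}) + (1-p_t)\, g(\xi^{t,0}) - g(\xi)\bigr),
\]
where $\xi^{t,i}$ denotes $\xi$ with coordinate $t$ set to $i$. This chain is ergodic with stationary distribution $\nu$, so for any $A\subseteq\{0,1\}^I$ the Poisson equation $Lg_A = \1_A - \nu(A)$ is solved by $g_A = -\int_0^\infty P_s(\1_A - \nu(A))\,\ud s$, with $(P_s)$ the associated semigroup. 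Using the identity $g(\xi) = \xi_t g(\xi^{t,1}) + (1-\xi_t) g(\xi^{t,0})$ inside $(Lg_A)(X)$ and taking expectations yields
\[
\pr{X\in A} - \nu(A) = -\sum_{t\in I}\E{(X_t - p_t)\,\Delta_t g_A(X)},
\]
where $\Delta_t g(\xi) := g(\xi^{t,1}) - g(\xi^{t,0})$ depends on $\xi$ only through $(\xi_s)_{s\neq t}$.

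The first ingredient is the pair of smoothness estimates $\|\Delta_t g_A\|_\infty \leq 2$ and $\|\Delta_s\Delta_t g_A\|_\infty \leq 2$ for $s\neq t$, obtained by running two (respectively four) coupled copies of the resampling chain driven by shared Poisson clocks and uniform resampling variables, and integrating the decay $e^{-s}$ (respectively $e^{-2s}$) in the probability that not all distinguishing coordinates have yet been resampled. The second ingredient is a local/non-local split of the expectation. Let $\widetilde X^{(t)}$ denote $X$ with the coordinates in $\B_t\setminus\{t\}$ replaced by an independent $\bigotimes_s \mathrm{Ber}(p_s)$ sample, and write
\[
\E{(X_t - p_t)\Delta_t g_A(X)} = \E{(X_t - p_t)\bigl[\Delta_t g_A(X) - \Delta_t g_A(\widetilde X^{(t)})\bigr]} + \E{(X_t - p_t)\Delta_t g_A(\widetilde X^{(t)})}.
\]
For the second (non-local) piece, $\Delta_t g_A(\widetilde X^{(t)})$ is a function of $(X_s)_{s\notin\B_t}$ and of the independent $(\widetilde X_s)$, so conditioning on $(X_s)_{s\notin\B_t}$ and using $\|\Delta_t g_A\|_\infty \leq 2$ produces a term summing in $t$ to at most $2 b_3$. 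For the first (local) piece, telescoping the swap of the $\B_t\setminus\{t\}$-coordinates one at a time and using $\|\Delta_s\Delta_t g_A\|_\infty \leq 2$ together with the elementary bounds $\E{|X_t - p_t|\, X_s}\leq p_{ts} + p_tp_s$ and $\E{|X_t - p_t|\, \widetilde X_s}\leq 2 p_tp_s$ yields, upon summation, a constant multiple of $b_1+b_2$.

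Combining and taking the supremum over $A\subseteq\{0,1\}^I$ gives the stated bound $8(b_1+b_2+b_3)$, the universal constant $8$ absorbing the factors accumulated from the smoothness estimates and from separating $|X_t - p_t|\cdot X_s$ against $|X_t - p_t|\cdot \widetilde X_s$. The main obstacle is the double-coordinate smoothness bound $\|\Delta_s\Delta_t g_A\|_\infty \leq 2$: this is what keeps the constant in front of $b_1+b_2$ independent of $|I|$ and of the neighbourhood sizes $|\B_t|$, and without it the telescoping step would pick up a factor growing with $|\B_t|$, breaking the quantitative form of the theorem. The rest is routine bookkeeping once both smoothness estimates are in hand.
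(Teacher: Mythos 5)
The paper itself gives no proof of this statement --- it is quoted verbatim as a known result of Arratia--Goldstein--Gordon --- so there is no internal argument to compare against; judged on its own, your generator-form proof is correct in outline and is a genuinely different route from the cited source. The Stein identity via the coordinate-resampling (Glauber) dynamics for the product Bernoulli measure, the solution $g_A=-\int_0^\infty P_s(\1_A-\nu(A))\,\ud s$ of the Poisson equation, the coupling-based smoothness bounds (in fact the shared-clock coupling gives $\norm{\Delta_t g_A}_\infty\le 1$ and $\norm{\Delta_s\Delta_t g_A}_\infty\le 1$, so your factor $2$ is safe), the split against the independently resampled configuration $\widetilde X^{(t)}$, and the telescoping over $\B_t\setminus\{t\}$ all fit together: conditioning on $(X_s)_{s\notin\B_t}$ (the replacements being independent of $X$) bounds the non-local term by $2b_3$, while the local term is at most $2b_2+6b_1$ using $\E{|X_t-p_t|X_s}\le p_{ts}+p_tp_s$ and $\E{|X_t-p_t|\widetilde X_s}\le 2p_tp_s$, so the total is $6b_1+2b_2+2b_3\le 8(b_1+b_2+b_3)$; and the supremum over $A\subseteq\{0,1\}^I$ is exactly the process-level total variation distance, as required. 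By contrast, Arratia--Goldstein--Gordon obtain the Bernoulli-process bound by first proving Poisson (process) approximation via Chen--Stein and then transferring to the independent Bernoulli process; your argument stays entirely on $\{0,1\}^I$ and avoids Poisson approximation, at the price of having to establish the second-difference estimate for $g_A$ uniformly in $A$ --- which, as you correctly identify, is the step that keeps the constant free of $|I|$ and $|\B_t|$, and your four-chain coupling does deliver it. If you were to write this out in full, the only places needing a few extra lines are the verification that $Lg_A=\1_A-\nu(A)$ (convergence of the integral and differentiation of the semigroup) and the explicit telescoping identity $\Delta_t g_A(X^{(j-1)})-\Delta_t g_A(X^{(j)})=(X_{s_j}-\widetilde X_{s_j})\,\Delta_{s_j}\Delta_t g_A(\cdot)$; both are routine.
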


\begin{proof}[\bf Proof of Theorem~\ref{thm:gff}]
Let $\alpha > (1+p_d)/2$, where $p_d$ is the return probability to $0$ for simple random walk on $\Z^d$ and let $\gamma>0$ to be determined later. In order to apply Theorem~\ref{thm:chenstein} for every $x\in [\delta n, (1-\delta)n]^d\cap \Z^d$ we define its neighbourhood to be $\B_x= \B(x, n^\gamma)$. We now need to bound the quantities $b_1, b_2$ and $b_3$. To simplify notation we write $t=\sqrt{2\alpha d G(0)\log n}$. 
Using that $\phi_x$ has the Gaussian distribution with mean $0$ and variance $G(0)+O(n^{-(d-2)})$ we get 
	\[
	p_x=\pr{\phi_x\geq t} \asymp \frac{n^{-\alpha d}}{\sqrt{\log n}}.
	\]
So it immediately follows that $b_1\lesssim n^d n^{\gamma d}n^{-2\alpha d} $. Regarding the term $b_2$, we separate into two cases depending on whether or not $\norm{x-y}\leq n^\zeta$ for some $\zeta$ to be determined. Then using that $G(0,x)\asymp \norm{x}^{2-d}$ as $x\to \infty$ (see~\cite[Theorem~4.3.1]{LawLim}) and $G(0,x)\leq p_d G(0)$ for all $x\neq 0$ we get 
\[
b_2\lesssim n^{d} n^{\zeta d} n^{-2\alpha d/(1+p_d)} + n^d n^{\gamma d} n^{-2\alpha d}.
\]
Since $\alpha>\tfrac{1+p_d}{2}$, we can choose $\zeta$ sufficiently small and take $\gamma<2\alpha -1$ to get~$b_2 = o(1)$. 

It remains to bound the term $b_3$. We write $\Fout$ for the $\sigma$-algebra generated by $(\phi_y)_{y\notin \B_x}$.
By the Markov property for the Gaussian free field, we have  
\[
\prcond{\phi_x \geq t}{\Fout}{} = \prcond{h_x+ \xi_x \geq t}{\xi_x}{},
\]
where $\xi_x = \sum_{y\in \partial \B_x}p_{xy}\phi_y$ with $p_{xy}$ being the probability that a simple random walk started from $x$ first hits $\partial \B_x$ (outer boundary of $\B_x$) at $y$ and $h$ is an independent GFF in $\B_x$ with $0$ boundary condition on $\partial \B_x$. 
Write
\[
	v_h^2 = \var(h_x),
\quad
	v_\phi^2 = \var(\phi_x)
\quad\text{and}\quad
	v_\xi^2 = \var(\xi_x).
\]
Note that $v_h^2 = G(0) + O(n^{-\gamma(d-2)})$ and $v_\phi^2= G(0) + O(n^{-(d-2)})$, while $v_\xi^2 = O( n^{-\gamma(d-2)} )$.
We now have 
\begin{align}\label{eq:boundonabsolutevalue}
\nonumber	|&\prcond{h_x+ \xi_x \geq t}{\xi_x}{} - \pr{\phi_x\geq t}|
	\\ &\leq  |\prcond{h_x\geq t-\xi_x}{\xi_x}{} - \pr{h_x\geq t}| + |\pr{h_x\geq t} - \pr{\phi_x\geq t}|.
\end{align}
For the first term in the sum above taking expectations we get 
\begin{align}\label{eq:manyeq}
\begin{split}
&\E{	|\prcond{h_x\geq t-\xi_x}{\xi_x}{} - \pr{h_x\geq t}|} \leq \pr{|\xi_x|\geq t} + \E{|\xi_x| \exp\left( -(t-|\xi_x|)^2/(2v_h^2) \right)} \\
&\lesssim e^{-t^2/(2v_\xi^2)} +v_\xi \exp\left(-t^2/(2v_h^2) \right)\left(\E{\exp\left( 2t|\xi_x|/v_h^2  \right)}\right)^{1/2} \lesssim n^{-\alpha d} n^{-\gamma (d-2)/2},
\end{split}
\end{align}
where for the second inequality we used the Cauchy--Schwarz inequality. We now turn to the second term in~\eqref{eq:boundonabsolutevalue}. As noted above, we have $\Delta^2=|v_h^2-v_\phi^2|=O(n^{-\gamma (d-2)})$. Suppose without loss of generality that $v_h\geq v_\phi$. Let $\mathcal{N}$ be a $N(0,1)$ random variable independent of $\phi_x$. Then we have 
\begin{align*}
	|\pr{h_x\geq t} - \pr{\phi_x\geq t}| &= \left| \pr{\phi_x + \Delta \mathcal{N}\geq t} - \pr{\phi_x\geq t}   \right| \\
	&\leq \E{\left| \prcond{\phi_x + \Delta \mathcal{N}\geq t}{\mathcal{N}}{}   \right| - \pr{\phi_x\geq t}} \lesssim n^{-\alpha d} n^{-\gamma (d-2)/2},
\end{align*}
where for the last inequality we used exactly the same reasoning as for~\eqref{eq:manyeq}. This together with~\eqref{eq:manyeq} and~\eqref{eq:boundonabsolutevalue} gives  
\[\textstyle
b_3= \sum_{x} \E{\left|\prcond{\phi_x \geq t}{\Fout}{} - \pr{\phi_x\geq t}  \right|}\lesssim n^d n^{-\alpha d} n^{-\gamma (d-2)/2}.
\]
Therefore, if $\alpha > \tfrac34 ( d - \tfrac23 ) / ( d - 1)$ and $\gamma=2\alpha -1-\epsilon$, for $\epsilon$ sufficiently small, then $b_3=o(1)$ and this concludes the proof.	
\end{proof}

\section{Outline and preliminaries}\label{sec:excursions} 

We start by giving a brief outline of the proof of Theorem~\ref{thm:mainresult}. The strategy of the proof is to define another set, called $\overline{\U}$ below, that can be coupled with $\U(\alpha t_*)$ so they agree with high probability and for which we can apply more easily the Chen--Stein method to show that it is close to the distribution $\nu_{\alpha,n}$. 

For every point $x$ we look at the number of excursions across an annulus of suitably defined radii that the random walk completes by $\alpha t_*$. In~\cite{JasonPerla} it was shown that these numbers of excursions are highly concentrated. We include $x$ in the set $\overline{\U}$ if $x$ remains uncovered after the walk has completed the appropriate number of excursions in the annulus centred at $x$. This way, the events that the points separated by the annuli are uncovered become uncorrelated, hence making it easier to apply the Chen--Stein method. Using the concentration of the number of excursions, we show in Lemma~\ref{lem:comparison} that $\overline{\U}=\U(\alpha t_*)$ with high probability. 

In the remainder of this section we give some definitions and preliminary results that will be used in the proof of Theorem~\ref{thm:mainresult} in Section~\ref{sec:totalvar}.

We write~$\B(x,r)$ for the closed Euclidean ball in $\Z_n^d$ centred at $x$ of radius $r$, i.e.
\[\textstyle
\B(x,r) =\left\{y\in\Z_n^d: \, \sum_i(|y_i-x_i
|\wedge (n-|y_i-x_i|)\bigr)^2\leq r^2\right\},
\]
For a set~$A$ we define the boundary $\partial A$ to be the outer boundary, i.e.
\[
\partial A = \{y\notin A: \, \exists \,x \in A \, \text{adjacent to } y\}.
\]
\begin{definition}\label{def:excursions}
\rm{
For $r<R$ and $x\in \Z_n^d$ we define the following sequence of stopping times
\begin{align*}
\rho_0^x=\inf\{t\geq 0: X(t) \in \partial \B(x,r)\}, \\
\til{\rho}_0^x = \inf\{t\geq \rho_0^x: X(t)\notin \B(x,R)\}
\end{align*}
and inductively we set 
\begin{align*}
\rho_{k+1}^x &= \inf\{t\geq \til{\rho}_{k}^x: X(t) \in \partial \B(x,r)\} \\
\til{\rho}_{k+1}^x &= \inf\{ t\geq \rho_{k+1}^x: X(t) \notin \B(x,R)\}.
\end{align*}
When $x=0$, we omit the superscript. 
We call a path of the random walk trajectory an {\it{excursion}} if it starts from $\partial \B(x,R)$ and it comes back to $\partial \B(x,R)$ after hitting $\B(x,r)$.

We define $N_x(r,R,t)$ to be the total number of excursions across the annulus $\B(x,R)\setminus \B(x,r)$ before time~$t$ after the first time that $X$ hits $\partial \B(x,R)$, i.e.
\[\textstyle
N_x(r,R,t) = \min\left\{k\geq 0: \sum_{i=1}^{k}(\til{\rho}_i^x-\til{\rho}_{i-1}^x)\geq  t\right\}.
\]
}
\end{definition}

We next recall \cite[Lemma~2.2]{JasonPerla} proved in the Appendix of~\cite{JasonPerla} showing that the mixing time of the exit points of the excursions mix in time of order $1$.

\begin{lemma}[{\cite[Lemma~2.2]{JasonPerla}}]
\label{lem:coupling}
Let $R \geq 10r$ and let $Y_j$ be the exit point of the $j$-th excursion across $\ball{R}\setminus\ball{r}$.  Then $(Y_j)_j$ is a finite state space Markov chain. Let $\til{\pi}$ be its stationary distribution. Then the mixing time of the chain is of order $1$, i.e.\ there exists $k_0<\infty$ such that $\tmix = k_0$ and $k_0$ only depends on $d$.  
\end{lemma}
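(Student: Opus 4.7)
The plan is to verify the Markov property from the strong Markov property of $X$ and then deduce that $\tmix$ is $O(1)$ via a one-step Doeblin minorization. The Markov property is immediate: $\til\rho_j$ is a stopping time and $Y_j = X(\til\rho_j) \in \partial\B(0,R)$; applying the construction in Definition~\ref{def:excursions} to the shifted walk $X(\til\rho_j + \cdot)$ expresses the $(j+1)$-th excursion as a function of an independent copy of a random walk started at $Y_j$. So $(Y_j)_j$ is a finite state space Markov chain on $\partial\B(0,R)$.

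For the mixing bound, I would establish the following comparison. Let $\mu_y$ denote the law of $X(\rho_0^0)$ under $\prstart{\cdot}{y}$, i.e.\ the first hitting distribution on $\partial\B(0,r)$ started from $y \in \partial\B(0,R)$. The key claim is that there exists $c = c(d) > 0$, independent of $R$ and $r$ as long as $R \geq 10r$, such that
\[
\mu_{y_1}(z) \geq c \, \mu_{y_2}(z) \qquad \text{for all } y_1, y_2 \in \partial\B(0,R),\ z \in \partial\B(0,r).
\]
This is a quantitative Harnack-type estimate and follows from the Green's function asymptotic $G(x,y) \asymp \|x-y\|^{2-d}$ on $\Z^d$ (\cite[Theorem~4.3.1]{LawLim}): using a last-exit decomposition one writes $\mu_y(z)$ in terms of $G(y,w)$ for $w$ close to $z$, and since $R \geq 10r$ forces every $y \in \partial\B(0,R)$ to lie at distance in $[9r,11r]$ from every $w \in \partial\B(0,r)$, the values $G(y,w)$ are uniformly comparable across $y \in \partial\B(0,R)$, with constants depending only on $d$.

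Given this comparison, the conditional law of $Y_1$ given $Y_0 = y$ decomposes as a mixture $\mu_y \otimes q$, where $q(z,\cdot)$ is the law of the exit point from $\B(0,R)$ of a walk started at $z \in \partial\B(0,r)$ and is independent of $y$. The minorization on $\mu_y$ therefore lifts to a minorization on $\LL(Y_1 \mid Y_0 = y)$: coupling the entry points with overlap at least $c$ and then running the two walks together until exit yields
\[
\norm{\LL(Y_1 \mid Y_0 = y_1) - \LL(Y_1 \mid Y_0 = y_2)}_{\rm TV} \leq 1 - c
\]
for all $y_1, y_2 \in \partial\B(0,R)$, whence $\norm{p^k(y,\cdot) - \til\pi}_{\rm TV} \leq (1-c)^k$ and $\tmix \leq k_0(d)$ for $k_0$ chosen so that $(1-c)^{k_0} \leq 1/4$. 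The main obstacle is verifying the Green's function comparison on the torus $\Z_n^d$ rather than on $\Z^d$; since the lemma will only be applied with $R$ much smaller than $n$, the torus Green's function agrees with that on $\Z^d$ up to lower-order corrections that do not affect the comparison, so this transfer is straightforward but needs to be noted. Alternatively, one could invoke the elliptic Harnack inequality for the harmonic function $x \mapsto \prstart{X(\rho_0^0) = z}{x}$ on $\B(0,R) \setminus \B(0,r)$ to obtain the same conclusion.
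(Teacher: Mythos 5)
Your argument is correct and is essentially the same coupling-via-Harnack argument behind the cited result: this paper does not reprove the lemma (it quotes \cite[Lemma~2.2]{JasonPerla}, proved in the appendix there), and that proof likewise rests on the uniform comparability, with constants depending only on $d$, of the entrance distributions on $\partial\B(0,r)$ from points of $\partial\B(0,R)$, which gives exactly your Doeblin minorization and geometric contraction in total variation. Of your two justifications of the minorization, the elliptic Harnack inequality for $x\mapsto\prstart{X(\rho_0^0)=z}{x}$ (harmonic off $\partial\B(0,r)$, compared along an $O(1)$ chain of balls around $\partial\B(0,R)$) is the cleaner one, since on the torus the walk is recurrent and the Green's function/last-exit sketch indeed needs the transfer you flag.
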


\begin{corollary}\label{cor:jointprocess}
	The process $(Y_{i-1}, Y_i)$ mixes in time of order $1$ and its stationary distribution is given by $\nu(x,y) = \til{\pi}(x) P(x,y)$, where $P$ is the transition matrix of $Y$. Moreover, there exist positive constants $c_1$ and $c_2$ so that for all $(x,y)\in \partial \B(0,R)\times \partial \B(0,R)$ the measure $\nu$ satisfies 
	\[
	c_1R^{-2(d-1)}\leq \nu(x,y)\leq c_2R^{-2(d-1)}.
	\]
\end{corollary}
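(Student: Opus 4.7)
The plan is to address the three assertions of the corollary—the Markov structure with its stationary distribution, the $O(1)$ mixing time, and the pointwise bounds on $\nu$—in order.

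First, because $Y$ is a Markov chain on $\partial\B(0,R)$, the pair $(Y_{i-1},Y_i)$ is itself a Markov chain on $\partial\B(0,R)\times\partial\B(0,R)$: the transition from $(x,y)$ to $(y',z)$ vanishes unless $y'=y$, in which case it equals $P(y,z)$. A direct calculation verifies that $\nu(x,y)=\til{\pi}(x)P(x,y)$ is stationary, since for fixed $(y,z)$ one has $\sum_{x}\nu(x,y)P(y,z)=\til{\pi}(y)P(y,z)=\nu(y,z)$. For the mixing time, I would couple two copies of the pair chain by first running one step so that each second coordinate has a fresh law, then invoking Lemma~\ref{lem:coupling} to couple the $Y$-marginal in $k_0$ further steps; one additional step then couples the remaining coordinate, giving a pair-chain mixing time of $k_0+O(1)$.

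For the two-sided estimate $\nu(x,y)\asymp R^{-2(d-1)}$, the strategy is to prove separately that $\til{\pi}(x)\asymp R^{-(d-1)}$ and $P(x,y)\asymp R^{-(d-1)}$ uniformly in $x,y\in\partial\B(0,R)$, after which the claim follows by multiplication. Both estimates reduce to a single harmonic-measure statement: for any $z\in\B(0,r)\subseteq\B(0,R/10)$, the hitting distribution on $\partial\B(0,R)$ of the simple random walk started at $z$ has density bounded above and below by constant multiples of $R^{-(d-1)}$, uniformly in $z$ and in the exit point. This is the discrete Poisson-kernel estimate one obtains from the Green function asymptotic $G(0,x)\asymp\|x\|^{2-d}$ of \cite[Theorem~4.3.1]{LawLim} via a last-exit decomposition. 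Applied to the entry point $Z\in\partial\B(0,r)$ of the first excursion from $x$ via the strong Markov property, it yields $P(x,y)=\estart{H(Z,y)}{x}\asymp R^{-(d-1)}$; inserted into the identity $\til\pi(x)=\sum_{z}\til\pi(z)P(z,x)$, it yields $\til\pi(x)\asymp R^{-(d-1)}$.

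The main obstacle is the uniform harmonic-measure estimate on the discrete sphere with constants independent of $R$. The Euclidean heuristic—that from any point deep inside a ball the exit measure on the bounding sphere is close to uniform—is immediate from the continuous Poisson kernel, but transcribing it to $\Z^d$ requires the precise Green function asymptotic together with some regularity of $\partial\B(0,R)$ as a discrete hypersurface. Once this is in hand, the multiplicative decomposition $\nu=\til\pi\cdot P$ and the Markov structure turn the remainder of the proof into routine bookkeeping.
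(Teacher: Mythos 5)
Your proposal is correct, and for the heart of the statement---the two-sided bound on $\nu$---it follows the same route as the paper: establish the uniform kernel bound $P(x,y)\asymp R^{-(d-1)}$ for the exit-point chain, feed it into the stationarity equation to get $\til{\pi}(x)\asymp R^{-(d-1)}$, and multiply. The paper simply invokes Harnack's inequality for the kernel bound, whereas you derive it from the Green's function asymptotics of \cite[Theorem~4.3.1]{LawLim} via a last-exit/Poisson-kernel estimate for the exit measure of $\B(0,R)$ from points of $\B(0,R/10)$; these are the same standard elliptic fact stated with different labels, so nothing is lost or gained there. The one place you genuinely diverge is the mixing claim: you extract a coalescing coupling from Lemma~\ref{lem:coupling} and pay an extra step to couple the pair chain, which works but implicitly uses the standard (and slightly fussy) passage from a mixing-time bound to a coupling that meets with good probability; the paper instead notes the exact identity $\norm{\LL(Y_t,Y_{t+1})-\nu}_{\mathrm{TV}}=\norm{\LL(Y_t)-\til{\pi}}_{\mathrm{TV}}$, which follows directly from $\LL(Y_t,Y_{t+1})(x,y)=\LL(Y_t)(x)P(x,y)$ and $\nu(x,y)=\til{\pi}(x)P(x,y)$, and transfers the order-one mixing time with no coupling construction at all.
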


\begin{proof}[\bf Proof]
	By the definition of total variation distance it is easy to show that for all times $t$ we have 
	\[
	\|\LL(Y_t) - \til{\pi} \|_{\rm{TV}} = \|\LL(Y_t,Y_{t+1}) - \nu \|_{\rm{TV}}.
	\]
	This together with Lemma~\ref{lem:coupling} shows that the mixing time of $(Y_{i-1},Y_i)$ is of order $1$. 
	
	For the second claim we use that by Harnack's inequality there exists a universal constant~$c$ so that for all $(x,y) \in \partial \B(0,R)\times \partial \B(0,R)$ 
	\[
 \frac{1}{cR^{d-1}}\leq 	P(x,y) \leq \frac{c}{R^{d-1}}.
	\]
	This now implies that $\til{\pi}(x)\asymp R^{-(d-1)}$, and hence this completes the proof. \end{proof}

\begin{definition}\label{def:trR}\rm{
For $R\geq 10r$ we let 
\[
T_{r,R} = \estart{\til{\rho}_1 - \til{\rho}_0}{\til{\pi}},
\]
i.e.\ $T_{r,R}$ is the expected length of the excursion
when the walk is started on $\partial\ball{R}$ according to the stationary distribution $\til{\pi}$ of the exit points of the excursions across the annulus $\B(0,R)\setminus\B(0,r)$ as given in Lemma~\ref{lem:coupling}. 
}
\end{definition}
The following lemma was proved in~\cite{JasonPerla}. The main idea behind the proof is to allow enough time between excursions so that the walk mixes and this essentially gives an almost i.i.d.\ sequence of excursion lengths.

\begin{lemma}[{\cite[Lemma~2.4]{JasonPerla}}]
\label{lem:n1}
For each $\psi\in (0,1/2)$ there exists $n_0\geq 1$ and a positive constant $c$ such that for all $n\geq n_0$ the following is true. Suppose that $n/4\geq R\geq 10r$ and $t\asymp n^d\log n$. Then for all~$\delta>0$ such that~$\delta r^{d-2} n^{-\psi-1/2} \geq 1$ and~$\delta n^{\psi} \geq 1$, for all $x$ we have
\[
\pr{N_x(r,R,t) \notin [A,A']} \lesssim  n^\psi e^{-c\delta^2 r^{d-2}/n^\psi} + e^{-cn^\psi},
\]
where $A=t/((1+\delta)T_{r,R})$ and $A'=t/((1-\delta) T_{r,R})$.
\end{lemma}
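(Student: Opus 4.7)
The plan is to reduce the tail bound on $N_x(r,R,t)$ to a concentration estimate for the cumulative excursion length $S_k := \sum_{i=1}^k (\til{\rho}_i^x - \til{\rho}_{i-1}^x)$, and then to apply a Bernstein-type inequality for additive functionals of the rapidly-mixing Markov chain of exit points $(Y_i)$. First I would use the elementary equivalences $\{N_x(r,R,t) < A\} = \{S_A > t\}$ and $\{N_x(r,R,t) > A'\} = \{S_{A'} < t\}$ together with the identities $A\,T_{r,R} = t/(1+\delta)$ and $A'\,T_{r,R} = t/(1-\delta)$ to reduce the claim to a two-sided concentration estimate of the shape $|S_k - k T_{r,R}| \leq c \delta k T_{r,R}$ with high probability, for $k \in \{A, A'\}$.

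Second, I would establish moment and tail control for a single excursion length $L_i := \til{\rho}_i^x - \til{\rho}_{i-1}^x$. By the strong Markov property $L_i$ is a function of $Y_{i-1} = X(\til{\rho}_{i-1}^x)$ and the subsequent walk, and a Harnack-type hitting estimate yields $\estart{L_1}{y} \asymp T_{r,R}$ uniformly in $y \in \partial \B(x,R)$. More importantly, I would prove the sub-exponential tail $\prstart{L_1 > s T_{r,R}}{y} \lesssim e^{-cs}$ for all $s \geq 1$: a standard hitting-time argument on $\Z_n^d$ shows that with probability bounded below the walk returns from $\partial \B(x,R)$ into $\B(x,r)$ within time of order $T_{r,R}$, and iterating the strong Markov property delivers the exponential decay. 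Setting $K := n^\psi$ and $\hat{L}_i := L_i \wedge K T_{r,R}$, a union bound over the at most $A' \lesssim r^{d-2} \log n$ excursions then gives $\pr{\exists\, i \leq A' : L_i \neq \hat{L}_i} \lesssim e^{-c'n^\psi}$, accounting for the second term in the stated bound.

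Third, on the event where no truncation occurs I would apply Lezaud's concentration inequality to the truncated partial sum $\hat{S}_k := \sum_{i=1}^k \hat{L}_i$, viewed as an additive functional of the chain $(Y_i)$; alternatively one may group excursions into blocks of length $n^\psi$ and use the $O(1)$ mixing time of Lemma~\ref{lem:coupling} to treat blocks as nearly independent. With per-step conditional variance $\asymp T_{r,R}^2$, uniform bound $K T_{r,R}$, and spectral gap of order one, this yields a bound of Bernstein shape
\[
\pr{|\hat{S}_k - k T_{r,R}| > \delta k T_{r,R}} \lesssim n^\psi \exp\left(-c\,\frac{\delta^2 k T_{r,R}^2}{k T_{r,R}^2 + \delta k T_{r,R} \cdot K T_{r,R}}\right),
\]
where the $n^\psi$ prefactor absorbs the non-stationary starting law of $Y_0$ via an $O(n^\psi)$ burn-in. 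Plugging in $k \asymp r^{d-2} \log n$ and $K = n^\psi$, the two hypotheses $\delta n^\psi \geq 1$ and $\delta r^{d-2} n^{-\psi-1/2} \geq 1$ place us in the variance-dominated regime and produce the claimed $n^\psi e^{-c \delta^2 r^{d-2}/n^\psi}$.

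The main obstacle is the sub-exponential tail bound for $L_1$: it is what makes the truncation at scale $K T_{r,R}$ cost only $e^{-c n^\psi}$ in probability, and hence what lets the Bernstein exponent match the stated form. Proving it cleanly requires a quantitative hitting-time estimate for simple random walk on $\Z_n^d$, uniform in the starting point on $\partial \B(x,R)$, together with iteration via the strong Markov property. The remaining matching of the parameters $(\delta, r, R, \psi)$ to the stated exponent is technical but routine once truncation and Lezaud are in place.
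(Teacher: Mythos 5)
You should first note that this paper does not actually prove Lemma \ref{lem:n1}: it is quoted from \cite[Lemma~2.4]{JasonPerla}, and the only hint given here is that one spaces the excursions so that the walk mixes in between, making the excursion lengths nearly i.i.d. Your proposal is a sound reconstruction along essentially those lines: the reduction of $\{N_x(r,R,t)\notin[A,A']\}$ to two-sided concentration of the cumulative excursion time $S_k$ at $k\in\{A,A'\}$ is correct (up to harmless integer-part care); the uniform sub-exponential tail for a single excursion at scale $T_{r,R}$ is correct, since $\max_z \mathbb{E}_z[\tau_{\B(x,r)}]\asymp n^d r^{2-d}\asymp T_{r,R}$ and one iterates the Markov property, and the crossing part contributes only $O(R^2)\lesssim T_{r,R}$; truncation at $n^\psi T_{r,R}$ indeed costs only $e^{-c n^\psi}$ after the union bound. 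The bookkeeping also survives scrutiny: the hypotheses give $\delta^2 r^{d-2}/n^\psi=(\delta r^{d-2}n^{-\psi-1/2})(\delta n^{1/2})\geq n^{1/2-\psi}$, so any polynomial prefactor (your $n^\psi$, a Lezaud-type $R^{d-1}$, or the number of excursions) is absorbed by shrinking $c$; and in the only meaningful range $\delta<1$ the truncation-dominated Bernstein exponent of order $\delta k/n^\psi$ with $k\asymp r^{d-2}\log n$ still dominates the claimed $\delta^2 r^{d-2}/n^\psi$, so your ``variance-dominated regime'' remark, while not literally accurate, is harmless.

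The one step that does not work as written is the direct appeal to Lezaud: $\hat S_k$ is \emph{not} an additive functional of the exit-point chain $(Y_i)$, because the duration of the $i$-th excursion is not a function of $(Y_{i-1},Y_i)$ --- it carries extra randomness conditionally on that pair. Lezaud's inequality, as used in this paper for $\sum_i \log f(Y_{i-1},Y_i)$, requires a bounded function of the state of a chain together with a spectral-gap (or similar) input; to apply it to $\hat S_k$ you would have to enlarge the chain to $(Y_{i-1},Y_i,\hat L_i)$ and establish a gap or mixing estimate for the enlarged, not obviously reversible, chain, which does not follow from Lemma~\ref{lem:coupling} or Corollary~\ref{cor:jointprocess}. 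Your fallback is the argument that actually delivers the stated bound term by term: split the excursions into $n^\psi$ interleaved subsequences, use the $O(1)$ mixing of $(Y_i)$ (or of the walk itself, as in the cited proof) to couple each subsequence with an i.i.d.\ sequence up to errors of size $e^{-cn^\psi}$, note that conditionally on the entrance/exit points the durations are exactly independent, and apply a scalar Bernstein/Chernoff bound within each subsequence before a union bound over the $n^\psi$ of them --- this is where the $n^\psi$ prefactor really comes from, not from a burn-in of $Y_0$. So the strategy is right, but you should promote the blocking/coupling route from an aside to the main argument, or else supply the missing spectral-gap input for the enlarged chain.
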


We finally recall another standard result that was proved in the Appendix of~\cite{JasonPerla} which shows that conditioning on the entrance and exit points of an excursion does not affect the probability of hitting the centre. The proof is an easy consequence of Harnack's inequality.

\begin{lemma}[{\cite[Lemma~3.2]{JasonPerla}}]\label{lem:hittingprob}
There exists a constant $C_d > 0$ depending only on $d$ such that the following is true.  Let $n/4 \geq R\geq 2r$ such that both $r$ and $R$ tend to infinity as $n\to \infty$. We denote by~$\tau_R$ the first hitting time of $\partial\B(0,R)$ and by $\tau_0$ the first hitting time of $0$. Then for all $x\in \partial\B(0,r)$ and all $y\in \partial\B(0,R)$ we have
\begin{align*}
\prcond{\tau_0<\tau_R}{X(\tau_R)=y}{x} = \frac{\crw}{r^{d-2}}\left(1+O\left(\frac{r}{R} \right) +O\left(\frac{1}{r^2} \right)  \right).
\end{align*}
The constant $\crw$ is given by $c_d/G(0)$, where $c_d$ is the constant from~\cite[Theorem~4.3.1]{LawLim} and $G$ is the Green's function for simple random walk on $\Z^d$.
\end{lemma}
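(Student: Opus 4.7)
The plan is to decouple the event $\{\tau_0 < \tau_R\}$ from the exit location via the strong Markov property, and then exploit that the exit distribution from $\ball{R}$ is nearly insensitive to the starting point inside the much smaller ball $\ball{r}$. Concretely, applying the strong Markov property at $\tau_0$ and restarting at $0$,
\[
\prstart{\tau_0 < \tau_R,\; X(\tau_R) = y}{x}
= \prstart{\tau_0 < \tau_R}{x}\,\prstart{X(\tau_R) = y}{0}.
\]
Dividing through gives
\[
\prcond{\tau_0 < \tau_R}{X(\tau_R)=y}{x}
= \prstart{\tau_0<\tau_R}{x}\cdot\frac{\prstart{X(\tau_R)=y}{0}}{\prstart{X(\tau_R)=y}{x}},
\]
so it suffices to analyse the two factors separately.

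For the first factor I would use the standard representation in terms of the Green's function $G_R$ of the walk killed on $\partial\ball{R}$, namely $\prstart{\tau_0<\tau_R}{x}=G_R(x,0)/G_R(0,0)$, together with $G_R(\cdot,0)=G(\cdot,0)-\estart{G(X_{\tau_R},0)}{\cdot}$. Plugging in the asymptotic $G(z,0)=c_d/|z|^{d-2}+O(|z|^{-d})$ from \cite[Theorem~4.3.1]{LawLim} at $|x|=r$ and $|X_{\tau_R}|\asymp R$, the numerator becomes $c_d r^{-(d-2)}(1+O(1/r^2)+O((r/R)^{d-2}))$ and the denominator is $G(0)(1+O(R^{-(d-2)}))$. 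Since $(r/R)^{d-2}\le r/R$ for $d\ge 3$ and $r\le R$, this yields
\[
\prstart{\tau_0<\tau_R}{x} = \frac{\crw}{r^{d-2}}\bigl(1+O(r/R)+O(1/r^2)\bigr),
\]
with $\crw = c_d/G(0)$ as stated.

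For the ratio of exit probabilities, the function $g(z)\colonequals\prstart{X(\tau_R)=y}{z}$ is a nonnegative function that is harmonic for the random walk on $\ball{R}\setminus\partial\ball{R}$. Since $r \le R/2$ (after possibly adjusting constants by requiring $n$ large enough for $R\geq 10r$, which is the regime we use), a quantitative version of the elliptic Harnack inequality for discrete harmonic functions on $\Z^d$ gives the gradient-type bound $g(z)=g(0)\bigl(1+O(|z|/R)\bigr)$ uniformly in $|z|\le r$. Applying this at $z=x\in\partial\ball{r}$ gives $g(0)/g(x)=1+O(r/R)$. Multiplying the two estimates yields the claimed expansion, the $O(r/R)+O(1/r^2)$ error being an aggregation of the Harnack error and the Green's-function correction.

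The main technical point is the quantitative Harnack step: the usual Harnack inequality only guarantees boundedness of $g(0)/g(x)$, whereas the lemma requires a specific $O(r/R)$ rate. I would handle this either by invoking an existing gradient estimate for discrete harmonic functions (e.g.\ comparing $g$ to $\prstart{X(\tau_R)=y}{0}$ via the harmonic measure and using that the hitting distribution on $\partial\ball{R}$ from any $z\in\ball{r}$ has density $R^{-(d-1)}(1+O(r/R))$ with respect to the one from $0$), or by iterating the discrete mean-value property $O(\log(R/r))$ times to propagate the bound from $0$ out to $\partial\ball{r}$. Either route is standard, but verifying that the constant hidden in the $O$ depends only on $d$ (and not on $y$ or the specific direction of $x$) is the only part requiring any care; everything else reduces to plugging known Green's-function asymptotics into the strong Markov decomposition.
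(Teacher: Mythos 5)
Your decomposition --- the strong Markov property at $\tau_0$ to factor $\prstart{\tau_0<\tau_R,\,X(\tau_R)=y}{x}=\prstart{\tau_0<\tau_R}{x}\,\prstart{X(\tau_R)=y}{0}$, Green's-function asymptotics for $\prstart{\tau_0<\tau_R}{x}=G_R(x,0)/G_R(0,0)$, and a quantitative Harnack/gradient estimate showing the exit distributions from $0$ and from $x$ agree to within a factor $1+O(r/R)$ --- is correct and is essentially the argument the paper relies on: the paper does not reprove the lemma but cites the Appendix of \cite{JasonPerla}, describing the proof there as an easy consequence of Harnack's inequality, and that proof proceeds along the same lines. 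One pedantic remark: since points of $\partial\B(0,r)$ only satisfy $|x|=r+O(1)$, replacing $|x|^{d-2}$ by $r^{d-2}$ in your numerator costs a relative error $O(1/r)$ rather than the stated $O(1/r^2)$; this quibble is inherited from the quoted statement and is harmless for every use made of the lemma in this paper.
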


\begin{remark}\rm{
To avoid confusion, we emphasize that $\tau_x, 
\tau_y$ and $\tau_z$ will always refer to hitting times of a point, while $\tau_r$ and $\tau_R$ to hitting times of boundaries of balls.
}
\end{remark}

We recall $p_d$ is the probability that a simple random walk on~$\Z^d$ started from $0$ returns to~$0$. 
For $d=3$, it is well-known (see e.g.~\cite{Spitzer}) that $p_3\approx 0.34$. It is also easy to see that $p_d\to 0$ as $d\to \infty$. Note that $p_d$ is equal to the probability that a simple random walk in~$\Z^d$ starting from~$0$ visits a given neighbour of~$0$ before escaping to~$\infty$.

\begin{lemma}[{\cite[Lemma~3.7]{JasonPerla}}]\label{lem:2pointestimate}
Let $n/4 \geq R>2r \to \infty$ and $x,y\in \Z_n^d$ satisfying $\norm{x-y}=o(r)$. 
We denote by $\tau_R$ the first hitting time of~$\B(x,R)$ and by $\tau_x$ (resp.\ $\tau_y$) the first hitting time of~$x$ (resp.~$y$).
Then for all $a\in \partial\B(x,r)$ and all $b\in \partial\B(x,R)$ we have
\begin{align*}
\prcond{\tau_x\wedge\tau_y<\tau_R}{X(\tau_R)=b}{a} &\geq \frac{2\crw}{(1+p_d)r^{d-2}}\left( 1 +o(1)+O\left(\frac{r}{R} \right) + O\left(\frac{1}{r^2} \right) \right).
\end{align*}
\end{lemma}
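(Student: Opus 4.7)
The plan is to combine inclusion--exclusion under the conditional law with Lemma~\ref{lem:hittingprob} and a strong Markov decomposition at $\tau_x\wedge\tau_y$. Writing $H_z=\{\tau_z<\tau_R\}$, inclusion--exclusion gives
\[
\prcond{H_x}{X(\tau_R)=b}{a}+\prcond{H_y}{X(\tau_R)=b}{a}
	=\prcond{H_x\cup H_y}{X(\tau_R)=b}{a}+\prcond{H_x\cap H_y}{X(\tau_R)=b}{a},
\]
so it suffices to lower bound the two marginals and upper bound the intersection in terms of the union.

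Both marginals come from Lemma~\ref{lem:hittingprob}: for $x$ it is a direct application, while for $y$, since $\|x-y\|=o(r)$, the balls $\B(y,r(1+o(1)))$ and $\B(y,R(1+o(1)))$ sandwich $\B(x,r)$ and $\B(x,R)$, so the lemma applied with $y$ as the centre, together with a Harnack comparison of the exit-point weights for $\B(x,R)$ and $\B(y,R)$, yields $\prcond{H_y}{X(\tau_R)=b}{a}=\frac{\crw}{r^{d-2}}(1+o(1)+O(r/R)+O(1/r^2))$.

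For the intersection I would decompose $\{H_x\cap H_y\}$ by which of $x,y$ is hit first and apply the strong Markov property at $\tau_x\wedge\tau_y$ and then at the subsequent hit of the other point. Writing $h(z)=\pr{X(\tau_R)=b\mid X_0=z}$, the ordering $\tau_x<\tau_y$ contributes $\pr{\tau_x<\tau_y\wedge\tau_R\mid X_0=a}\,\pr{\tau_y<\tau_R\mid X_0=x}\,h(y)$, while the analogous term in the expansion of $\pr{H_x\cup H_y, X(\tau_R)=b\mid X_0=a}$ is $\pr{\tau_x<\tau_y\wedge\tau_R\mid X_0=a}\,h(x)$. The inner factor $\pr{\tau_y<\tau_R\mid X_0=x}$ is bounded by the $\Z^d$ hitting probability $G(x,y)/G(0)\le p_d$ (with $p_d$ attained in the nearest-neighbour case), and Harnack's inequality on $\B(x,R)\setminus\{b\}$ gives $h(x)/h(y)=1+O(r/R)$, so swapping the exit-point weight costs only an $O(r/R)$ error. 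The symmetric ordering yields the analogous bound, and together
\[
\prcond{H_x\cap H_y}{X(\tau_R)=b}{a}\le p_d(1+O(r/R))\,\prcond{H_x\cup H_y}{X(\tau_R)=b}{a}.
\]
Substituting into the inclusion--exclusion identity and solving for the union produces the stated lower bound with constant $2/(1+p_d)$.

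The main obstacle is extracting the sharp constant: the naive estimate $\pr{H_x\cap H_y}\le p_d\pr{H_x}$ only yields the weaker factor $2(1-p_d)$, so the intersection must be compared to the union rather than to a marginal. Doing this forces the $h(x)$ versus $h(y)$ mismatch to appear in the strong Markov expansion, and Harnack's inequality is essential in closing the argument.
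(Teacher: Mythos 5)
The paper itself does not prove this lemma --- it is imported verbatim from the cited reference --- so there is no in-text argument to compare against; judged on its own terms, your proposal is essentially sound and produces the right constant. The key mechanism is correct: decomposing $H_x\cap H_y$ according to which of $x,y$ is hit first, applying the strong Markov property at the first and second hits, bounding the inner factor by $\prstart{\tau_y<\tau_R}{x}\le G(x,y)/G(0)\le p_d$ (legitimate because everything happens before exiting a ball of radius $R\le n/4$, which lifts isometrically to $\Z^d$), swapping the exit-point weights $h(x),h(y)$ at the cost of a Harnack factor $1+O(r/R)$, and --- crucially --- comparing the intersection with the \emph{union} rather than with a single marginal, which is exactly what yields $2/(1+p_d)$ after solving the inclusion--exclusion inequality. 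This is an equivalent repackaging of the usual derivation of such two-point estimates, which runs the same algebra through expected numbers of visits (Green's functions) of the set $\{x,y\}$ before $\tau_R$, using $G(x,y)\le p_d\,G(0)$, and then removes the conditioning on the exit point by Harnack as in Lemma~\ref{lem:hittingprob}.

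The one step you should tighten is the marginal for $y$. You cannot literally ``apply Lemma~\ref{lem:hittingprob} with $y$ as the centre'': the starting point $a$ lies on $\partial\B(x,r)$, not on a sphere around $y$, and, more importantly, the conditioning event $\{X(\tau_R)=b\}$ refers to the exit point of $\B(x,R)$, not of any ball centred at $y$, so sandwiching balls around $y$ does not transfer the \emph{conditional} statement directly, and ``a Harnack comparison of the exit-point weights for $\B(x,R)$ and $\B(y,R)$'' is left vague. A clean fix stays inside $\B(x,R)$: by the strong Markov property at $\tau_y$, $\prcond{\tau_y<\tau_R}{X(\tau_R)=b}{a}=\prstart{\tau_y<\tau_R}{a}\,h(y)/h(a)$; the ratio $h(y)/h(a)=1+O(r/R)$ follows from the same Harnack/gradient estimate you already invoke (all of $a,x,y$ lie in $\B(x,R/2)$ since $R>2r$); and the unconditioned probability is $\prstart{\tau_y<\tau_R}{a}=G_{\B}(a,y)/G_{\B}(y,y)=\crw\, r^{2-d}\left(1+o(1)+O(r/R)+O(1/r^2)\right)$, where $G_{\B}$ is the Green's function of the walk killed on exiting $\B(x,R)$, the $o(1)$ absorbs $\norm{a-y}=r(1+o(1))$, and $O((r/R)^{d-2})\le O(r/R)$ for $d\ge3$. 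With that replacement (and noting the same factorisation justifies your use of $h(y)$, $h(x)$ in the strong Markov expansion of the intersection and union), your argument is complete and matches the stated error terms.
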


\section{Uncovered set}\label{sec:totalvar}

In this section we prove Theorem~\ref{thm:mainresult}. 
For $\alpha>\alpha_0(d)=(1+p_d)/2$, we now fix $\gamma=2\alpha-1 -\epsilon$, with $\epsilon>0$ sufficiently small, and set $r=n^{\gamma(1-\epsilon)}$ and $R=n^\gamma$. 

Recall the definition of the times $(\rho_i)$ and $(\til{\rho}_i)$ from Definition~\ref{def:excursions}. 
Next we define a function $f: \partial \B(0,R)\times \partial\B(0,R)\to [0,1]$ given by 
\begin{align}\label{eq:defoff}
f(x,y) = \prcond{\tau_0>\til{\rho}_0}{X_{\til{\rho}_0}=y}{x},
\end{align}
i.e.\ this is the probability that $0$ is not hit in an excursion of the walk starting from $x$ and conditioned to exit at $y$.

Recall the definition of the chain $Y$ from Lemma~\ref{lem:coupling} as the sequence of exit points of the excursions and $\nu$ stands for its invariant distribution. Let 
\[
\Fexc=\sigma(Y_i:i\geq 1) \  \text{ and } \ m=-\estart{\log f(Y_0,Y_1)}{\nu}.
\]
 We take $\delta=r^{(2-d)/2}n^\psi$ for $\psi>0$ sufficiently small and define 
\begin{align}\label{eq:defoftstar}
t_*= \frac{1}{m}\cdot \log (n^d) T_{r,R} \quad \text{ and }\quad  A = \frac{\alpha}{(1+\delta)} \cdot\frac{t_*}{T_{r,R}}.
\end{align}

\begin{lemma}\label{lem:expreform}
	As $n\to\infty$ we have 
	\[
	m=C_d \cdot n^{-\gamma(1-\epsilon) (d-2)}(1+O(n^{-\gamma \epsilon})) \quad \text{ and } \quad t_* = \tcov (1+o(1)).
	\]
	In particular, 
	\[
	A = \alpha C_d^{-1}\cdot  n^{\gamma(1-\epsilon)(d-2)} \log (n^d)\cdot (1+O(\delta)+O(n^{-\gamma \epsilon})).
	\]
\end{lemma}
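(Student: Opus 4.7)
The plan is to deduce all three assertions from a single asymptotic for $1 - f(x,y)$, obtained by applying Lemma~\ref{lem:hittingprob} together with the strong Markov property at $\rho_0$, and then to match the resulting expression for $t_*$ against the known cover-time asymptotic on $\Z_n^d$.

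\textbf{Step 1 (asymptotic of $m$).} Fix $x,y \in \partial \B(0,R)$. By the strong Markov property at $\rho_0$ (the first hit of $\partial\B(0,r)$, which necessarily precedes $\tau_0$ since $0 \in \B(0,r)$),
\[
\prstart{\tau_0 < \til\rho_0,\, X_{\til\rho_0} = y}{x} = \sum_{Z \in \partial \B(0,r)} \prstart{X_{\rho_0} = Z}{x}\, \prstart{\tau_0 < \tau_R,\, X_{\tau_R} = y}{Z},
\]
where $\tau_R$ on the right denotes the first hit of $\partial\B(0,R)$. Lemma~\ref{lem:hittingprob} gives
\[
\prcond{\tau_0 < \tau_R}{X_{\tau_R} = y}{Z} = \frac{C_d}{r^{d-2}}\bigl(1 + O(r/R) + O(r^{-2})\bigr)
\]
with an error uniform in $Z \in \partial\B(0,r)$ and $y \in \partial\B(0,R)$, so this factor pulls out of the sum. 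An identical strong-Markov decomposition of $\prstart{X_{\til\rho_0} = y}{x}$ (without the hitting constraint) yields, after division,
\[
1 - f(x,y) = \frac{C_d}{r^{d-2}}\bigl(1 + O(r/R) + O(r^{-2})\bigr)
\]
uniformly in $x,y \in \partial\B(0,R)$. Since $r/R = n^{-\gamma\epsilon}$ dominates $r^{-2}$, applying the expansion $-\log(1-p) = p(1+O(p))$ and averaging under the probability measure $\nu$ gives
\[
m = \frac{C_d}{r^{d-2}}\bigl(1 + O(n^{-\gamma\epsilon})\bigr) = C_d\, n^{-\gamma(1-\epsilon)(d-2)}\bigl(1 + O(n^{-\gamma\epsilon})\bigr).
\]

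\textbf{Step 2 ($t_* = \tcov(1+o(1))$).} Directly from the definition, $t_* = \log(n^d)\, T_{r,R}/m$. The ratio $T_{r,R}/m$ has a natural probabilistic interpretation: since an excursion fails to hit $0$ with conditional probability $\approx 1 - C_d/r^{d-2}$, and excursions are essentially i.i.d.\ in length with mean $T_{r,R}$, the quantity $T_{r,R}/m$ is asymptotic to the expected hitting time of $0$ from the stationary distribution, which equals $(1+o(1))\,G(0)\,n^d$ on $\Z_n^d$ with $d\ge 3$ (classical potential theory, using that the torus mixes on time scale $O(n^2) \ll n^d$). Combined with the cover-time asymptotic $\tcov = (1+o(1))\,G(0)\,n^d \log(n^d)$ (Belius, building on Aldous and Dembo--Peres--Rosen--Zeitouni) and the identity $C_d = c_d/G(0)$ recorded in Lemma~\ref{lem:hittingprob}, this yields $t_* = \tcov(1+o(1))$.

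\textbf{Step 3 (formula for $A$).} By definition, $A = \frac{\alpha}{1+\delta}\cdot\frac{t_*}{T_{r,R}} = \frac{\alpha \log(n^d)}{(1+\delta)\, m}$. Substituting the expression for $m$ from Step~1 and expanding $\frac{1}{(1+\delta)(1+O(n^{-\gamma\epsilon}))} = 1 + O(\delta) + O(n^{-\gamma\epsilon})$ gives the stated formula for $A$.

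The main obstacle is Step~1: one must check that the factor $C_d/r^{d-2}$ from Lemma~\ref{lem:hittingprob} is \emph{uniform} in both the entrance point $Z \in \partial\B(0,r)$ and the exit point $y \in \partial\B(0,R)$, so that it factors cleanly out of the conditional sum above; this uniformity is built into the statement of that lemma. Step~2 is essentially bookkeeping against the known cover-time asymptotic, and Step~3 is a direct substitution.
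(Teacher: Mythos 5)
Your Steps 1 and 3 follow the paper's own proof essentially verbatim: the paper also reads off $f(x,y)=1-\frac{C_d}{r^{d-2}}\bigl(1+O(1/r^2)+O(R^{-\epsilon})\bigr)$ directly from Lemma~\ref{lem:hittingprob} (your explicit strong Markov decomposition at $\rho_0$ is just the justification of that step, and the uniformity in the entrance and exit points that you invoke is indeed built into the lemma), then expands $-\log f$ and averages under $\nu$ to get $m$, and obtains the formula for $A$ by substituting into~\eqref{eq:defoftstar}. The only real divergence is Step 2: the paper does not prove $t_*=\tcov(1+o(1))$ at all, it cites \cite[Lemma~4.1]{JasonPerla}, which is proved in the appendix of that paper, whereas you attempt a self-contained derivation via $T_{r,R}/m\approx \estart{\tau_0}{\pi}\sim G(0)n^d$ together with the classical cover-time asymptotic $\tcov\sim G(0)n^d\log(n^d)$.

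The idea and the constants in your Step 2 are correct (this is precisely what the cited lemma encapsulates), but as written it is a heuristic rather than a proof. The assertion that ``$T_{r,R}/m$ is asymptotic to the expected hitting time of $0$ from stationarity'' hides the actual work: the excursion lengths are neither independent nor identically distributed (they are governed by the Markov chain of entrance/exit points), the number of excursions needed to hit $0$ is not independent of those lengths, and the excursion during which $0$ is hit contributes only a partial length. To get a genuine $(1+o(1))$ statement one needs a Wald/optional-stopping type argument with uniform control of the conditional excursion lengths, or equivalently a separate proof of the asymptotics of $T_{r,R}$ itself (of order $c_d\, n^d r^{-(d-2)}$), and then the comparison of the hitting time from $\partial\B(0,R)$ with the one from stationarity. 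This is exactly the content of the appendix lemma the paper cites, so either reproduce that argument or cite \cite[Lemma~4.1]{JasonPerla} as the paper does; as it stands, Step 2 is the one genuine gap in your proposal.
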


\begin{proof}[\bf Proof]
Since $r=R^{1-\epsilon}$, it follows from Lemma~\ref{lem:hittingprob} that for all $x$ and $y$ we have 
\begin{align*}
	f(x,y) =1- \frac{C_d}{r^{d-2}} \cdot (1+O(1/r^2)+O(R^{-\epsilon})).
\end{align*}
Therefore for all $x$ and $y$ we obtain
\[
-\log f(x,y) = \frac{C_d}{r^{d-2}} \cdot (1+O(1/r^2)+O(1/r^{d-2})+O(R^{-\epsilon})),
\]
and hence, substituting the value of $r$ proves the first equality of the lemma. 

The proof of the second equality follows from~\cite[Lemma~4.1]{JasonPerla} which is proved in the Appendix of \cite{JasonPerla}.
The last equality follows from the definition of $A$ from~\eqref{eq:defoftstar}.
\end{proof}

For every $x$ let $\sigma_x$ be the first time that the walk has completed $A$ excursions across the annulus $\B(x,R)\setminus \B(x,r)$, i.e.
\[
\sigma_x=\inf\{t\geq 0: N_x(r,R,t)\geq A\}.
\]
We also define
\[
\til{\tau}_x=\inf\left\{t\geq \tau_{\partial \B(x,R)}: X_t=x\right\}\quad \text{ and } \quad Q_x=\1(\til{\tau}_x>\sigma_x),
\]
and consider the set $\overline{\U}=\{x: Q_x=1\}$.

\begin{lemma}\label{lem:q0precise}
There exists a positive constant $c$ so that for all $\eta \in (0,1)$ we have 
	\begin{align*}
		\pr{\econd{Q_0}{\Fexc} \notin \left( \exp(-m(1+\eta)A), \exp(-m(1-\eta)A) \right)}\lesssim R^{d-1}\exp(-c\eta^2 A).
	\end{align*} 
		In particular, as $n\to\infty$ we have 
		\[
		\E{Q_0} = \frac{1}{n^{\alpha d}} \cdot (1+o(1)). 
		\]
\end{lemma}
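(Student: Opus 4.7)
The plan is to write $\econd{Q_0}{\Fexc}$ exactly as a multiplicative functional along the pair chain and then apply Lezaud's concentration inequality to its logarithm, finally integrating to recover $\E{Q_0}$. By the strong Markov property applied at the stopping times $\til{\rho}_j$ and the definition~\eqref{eq:defoff} of $f$, conditional on $\Fexc$ the events that $0$ is avoided during the $A$ successive excursions are conditionally independent, and the $j$-th has conditional probability $f(Y_{j-1},Y_j)$. Hence
\[
\econd{Q_0}{\Fexc} = \prod_{j=1}^{A} f(Y_{j-1},Y_j),
\qquad
-\log \econd{Q_0}{\Fexc} = \sum_{j=1}^{A} g(Y_{j-1},Y_j),
\]
where $g(x,y):=-\log f(x,y)$.

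The pair chain $Z_j:=(Y_{j-1},Y_j)$ on $\partial \B(0,R)\times\partial \B(0,R)$ has invariant law $\nu$ and mixes in $O(1)$ steps by Corollary~\ref{cor:jointprocess}, so its spectral gap is bounded below by a positive constant. Lemma~\ref{lem:hittingprob} gives $\|g\|_\infty \lesssim 1/r^{d-2}$, and by definition of $m$ together with Lemma~\ref{lem:expreform} one has $\nu(g)=m \asymp 1/r^{d-2}$, so $\|g-m\|_\infty \lesssim m$. Applying Lezaud's concentration inequality~\cite{Lezaud} to the centred function $g-m$ therefore yields
\[
\pr{\left|\sum_{j=1}^{A} g(Z_j) - Am\right| \geq \eta A m} \lesssim R^{d-1}\exp(-c\eta^2 A),
\]
the prefactor $R^{d-1}$ coming from the $L^2(\nu)$ norm of the Radon--Nikodym derivative of the initial law of $Z_1$ with respect to $\nu$, which is at most $\nu_{\min}^{-1/2}\asymp R^{d-1}$ by the lower bound in Corollary~\ref{cor:jointprocess}. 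Exponentiating delivers the first assertion of the lemma.

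For the in-particular statement, observe from~\eqref{eq:defoftstar} that $Am = \alpha d\log n/(1+\delta)$, and hence $\exp(-Am) = n^{-\alpha d/(1+\delta)} = n^{-\alpha d}(1+o(1))$ since $\delta\log n \to 0$ for the chosen $\psi$. Choosing $\eta\to 0$ slowly enough that $\eta Am\to 0$ while $\eta^2 A$ remains polynomially large in $n$ (e.g.\ $\eta = A^{-1/3}$), and splitting $\E{Q_0}=\E{\econd{Q_0}{\Fexc}}$ across the concentration event and its complement, the good-event contribution is $\exp(-Am)(1+o(1))$ by the first assertion, while the bad-event contribution is at most $R^{d-1}\exp(-c\eta^2 A)$, which is negligible relative to $\exp(-Am)=n^{-\alpha d + o(1)}$. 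The main obstacle will be matching Lezaud's hypotheses to the pair chain $Z$ with constants sharp enough to leave only $R^{d-1}$ in front and $\eta^2 A$ in the exponent, without spurious polynomial factors; this relies crucially on the $O(1)$ mixing of $Z$ and on the fact that $g-m$ is comparable in $L^\infty$ to $m$, so that the variance proxy matches the square of the sup-norm up to constants.
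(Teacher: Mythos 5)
Your proposal is correct and follows essentially the same route as the paper: conditional independence given the excursion endpoints yields $\econd{Q_0}{\Fexc}=\prod_{j=1}^A f(Y_{j-1},Y_j)$, Lezaud's inequality applied to the pair chain $(Y_{j-1},Y_j)$ (with the $R^{d-1}$ prefactor coming from the $L^2(\nu)$ norm of the initial density via Corollary~\ref{cor:jointprocess}) gives the concentration bound, and integrating over the concentration event with $\eta$ chosen so that $\eta m A\to 0$ while $\eta^2 A$ grows recovers $\E{Q_0}=n^{-\alpha d}(1+o(1))$. The only difference is your specific choice $\eta=A^{-1/3}$ versus the paper's $\eta\asymp n^{-\gamma(1-\epsilon)(d-2)/2}\log n$, which is immaterial.
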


\begin{proof}[\bf Proof]

Since after conditioning on $\sigma((Y_i)_{i\geq 1})$ the events that $0$ is hit in the $i$-th excursion become independent, we obtain
\[\textstyle
\econd{Q_0}{\Fexc} = \prod_{i=1}^{A} f(Y_{i-1},Y_i),
\]
where the function $f$ was defined in~\eqref{eq:defoff}. Taking logarithms we get  
\[\textstyle
\log \econd{Q_0}{\Fexc} = \sum_{i=1}^{A} \log f(Y_{i-1},Y_i).
\]
Using now \cite[Theorem~1.1 and Remark~1]{Lezaud} and Corollary~\ref{cor:jointprocess}, for positive constants $c$ and $C$ and all $\eta\in (0,1)$ we get
\begin{align*}
	\pr{\left|\frac{1}{A}\sum_{i=1}^{A} \frac{\log f(Y_{i-1},Y_i)}{-m} - 1\right| \geq \eta} \leq C R^{d-1}\exp\left(-c\eta^2A \right).
\end{align*}
This proves the first statement of the lemma.

We turn to proving the second statement. Let $F$ be the event 
\[
F=\left\{ \econd{Q_0}{\Fexc} \in \left( \exp(-m(1+\eta)A), \exp(-m(1-\eta)A) \right)\right\}.
\]
Taking $\eta \asymp n^{-\gamma(d-2)(1-\epsilon)/2} \log n$ and using the definitions of $m$ and $A$ we thus deduce 
\begin{align*}
	\E{Q_0} &= \E{\econd{Q_0}{\Fexc}\1(F)} + \E{\econd{Q_0}{\Fexc}\1(F^c)} \\&
	\leq \exp\left(-m(1-\eta)A \right) + CR^{d-1}\exp\left(-c\eta^2A \right)
\\& \leq  \exp\left( -\alpha \log (n^d)(1+O(\delta)+O(\eta) )  \right) + \exp\left(-c'(\log n)^3\right)\\&
=n^{-\alpha d}(1+o(1)),
\end{align*}
where $c'$ is a positive constant. 
For the lower bound we get
\begin{align*}
	\E{Q_0} \geq \exp\left(-m(1+\eta)A \right) \cdot \left(1-\exp(-c\eta^2 A)\right) = n^{-\alpha d}(1+o(1)),
\end{align*}
where the equality again follows from Lemma~\ref{lem:expreform}.
\end{proof}

\begin{lemma}\label{lem:comparison}
	For $\alpha>\frac34(d-\frac 23)/(d-1)$ and all $\epsilon$ and $\psi$ (in the definitions of $r$ and $\delta$) sufficiently small as $n\to\infty$ we have 
	\[
	\pr{\U(\alpha t_*) = \overline{\U}} = 1-o(1).
	\]
\end{lemma}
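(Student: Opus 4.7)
The plan is to identify precisely when $U_x(\alpha t_*) \neq Q_x$ and union-bound the resulting failure events over $x \in \Z_n^d$. Since $\tilde{\tau}_x = \tau_x$ whenever $\tau_x \geq \tau_{\partial \B(x,R)}$, a short case analysis shows that $U_x(\alpha t_*) \neq Q_x$ forces at least one of:
\textbf{(A)} $\sigma_x > \alpha t_*$, equivalently $N_x(r,R,\alpha t_*) < A$;
\textbf{(B)} $\tau_x < \tau_{\partial \B(x,R)}$;
\textbf{(C)} $\tau_x \geq \tau_{\partial \B(x,R)}$ and $\tau_x \in (\sigma_x, \alpha t_*]$.
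I will bound the probability that each occurs for some $x$.

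For (A), the choice $\delta = r^{(2-d)/2} n^{\psi}$ makes $\delta^2 r^{d-2}/n^{\psi} = n^{\psi}$, so Lemma~\ref{lem:n1} applied at $t = \alpha t_* \asymp n^d \log n$ yields $\pr{N_x(r,R,\alpha t_*) \notin [A,A']} \lesssim e^{-c n^\psi}$, and union-bounding over the $n^d$ points is $o(1)$.

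For (B), I condition on $\F_{\tau_{\partial \B(x,R)}}$: the event $\{\tau_x < \tau_{\partial \B(x,R)}\}$ is measurable there, while $Q_x$ depends only on the walk after $\tau_{\partial \B(x,R)}$. The argument of Lemma~\ref{lem:q0precise} — Lezaud's concentration of $\sum_{i=1}^A \log f(Y_{i-1},Y_i)$ applied after the $O(1)$-step mixing phase of the exit-point chain $Y$ — yields $\prstart{Q_x=1}{z} \lesssim n^{-\alpha d}$ \emph{uniformly} in $z \in \partial \B(x,R)$. Hence $\pr{Q_x=1,\,\tau_x < \tau_{\partial \B(x,R)}} \lesssim n^{-\alpha d}\, \pr{\tau_x < \tau_{\partial \B(x,R)}}$. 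Under stationarity, reversibility of simple random walk together with $\sum_y G_{\B(x,R)}(y,x) = \estart{\tau_{\partial \B(x,R)}}{x} \asymp R^2$ and $G_{\B(x,R)}(x,x) \asymp 1$ give $\pr{\tau_x < \tau_{\partial \B(x,R)}} \asymp R^2/n^d$; summing over $x$, the contribution of (B) is $\lesssim R^2/n^{\alpha d} = n^{2\gamma - \alpha d}$, which is $o(1)$ for every $d \geq 3$ in our regime.

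Case (C) is the main obstacle and is what pins down the threshold on $\alpha$. Working on the event $\{N_x(r,R,\alpha t_*) \leq A'\}$ (whose complement is already controlled by (A)), the walk performs at most $A' - A + 1 \asymp \delta A$ excursions across $\B(x,R) \setminus \B(x,r)$ during $(\sigma_x, \alpha t_*]$, and Lemma~\ref{lem:hittingprob} gives each such excursion probability $\lesssim 1/r^{d-2}$ of hitting $x$. By the strong Markov property at $\sigma_x$ and a union bound over these excursions, the conditional probability of hitting $x$ in $(\sigma_x, \alpha t_*]$ on this event is $\lesssim \delta A/r^{d-2} \asymp \delta \log n$. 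Combining with the uniform bound $\pr{Q_x=1} \lesssim n^{-\alpha d}$ from Lemma~\ref{lem:q0precise} and summing over $x$, the contribution of (C) is $\lesssim n^{(1-\alpha)d}\, \delta \log n$. Substituting $\delta = n^{-\gamma(1-\epsilon)(d-2)/2 + \psi}$ with $\gamma = 2\alpha - 1 - \epsilon$, the exponent of $n$ becomes $(1-\alpha)d - \gamma(1-\epsilon)(d-2)/2 + \psi$; a direct calculation shows this is negative (for $\epsilon,\psi$ sufficiently small) precisely when $4\alpha(d-1) > 3d-2$, i.e.\ $\alpha > \tfrac34 (d - \tfrac23)/(d-1)$, which is exactly our hypothesis.
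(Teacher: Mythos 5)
Your proof is correct and takes essentially the same route as the paper: the inclusion $\U(\alpha t_*)\subseteq\overline{\U}$ via the concentration of excursion counts (Lemma~\ref{lem:n1}), and the reverse inclusion by bounding the $\asymp\delta A$ surplus excursions, each hitting $x$ with probability $\lesssim r^{-(d-2)}$ by Lemma~\ref{lem:hittingprob}, multiplied by $\E{Q_x}\lesssim n^{-\alpha d}$ from Lemma~\ref{lem:q0precise} and summed over $x$, giving exactly the paper's exponent and threshold $\alpha>\tfrac34(d-\tfrac23)/(d-1)$. Your case (B), where the stationary start lies inside $\B(x,R)$ and hits $x$ before reaching $\partial\B(x,R)$, is a point the paper's argument passes over silently; your Green's-function bound $\pr{\tau_x<\tau_{\partial\B(x,R)}}\asymp R^2/n^d$ handles it cleanly, but this is a refinement of the same argument rather than a different method.
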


\begin{proof}[\bf Proof]

We have by Lemma~\ref{lem:n1} that 
\begin{align*}
\pr{\U(\alpha t_*) \nsubseteq \overline{\U}}= \pr{\exists \ x: N_x(r,R,\alpha t_*)<A} \leq n^d \pr{N_x(r,R,\alpha t_*)<A} = o(1).
\end{align*}	
Set $A'=\alpha t_*/(T_{r,R}(1-\delta))$.
For each $x$ let $\til{\sigma}_x$ be the first time the walk has completed $A'$ excursions across the annulus $\B(x,R)\setminus \B(x,r)$. Then again by Lemma~\ref{lem:n1} we get
\begin{align*}\textstyle
	\pr{\min_x\til{\sigma}_x>\alpha t_*} = 1-o(1).
\end{align*}
We now obtain
\begin{align*}\textstyle
	\pr{ \overline{\U}\nsubseteq\U(\alpha t_*)}\leq \pr{\min_x\til{\sigma}_x>\alpha t_*, \ \exists \ x: \tau_x>\sigma_x \text{ and } \tau_x<\alpha t_*} + o(1).
\end{align*}
The first term on the right hand side above can be upper bounded by 
\begin{align*}
&\textstyle\sum_x \prcond{\tau_x<\alpha t_*, \til{\sigma}_x>\alpha t_*}{\tau_x> \sigma_x}{} \pr{\tau_x>\sigma_x}\\ &\textstyle\leq 
\sum_x\pr{x\text{ hit during } A'-A \text{ excursions}}\pr{\tau_x>\sigma_x}	.
\end{align*}
Using Lemma~\ref{lem:hittingprob} for a positive constant $c$ we have 
\[
\pr{x\text{ hit during } A'-A \text{ excursions}}\leq  1 - \left(1-\frac{c}{n^{\gamma (1-\epsilon)(d-2)}} \right)^{2\delta A/(1-\delta)}\asymp \delta \log n.
\]
We therefore deduce
\begin{align*}\textstyle
	\sum_x\pr{x\text{ hit during } A'-A \text{ excursions}}\pr{\tau_x>\sigma_x}\lesssim \delta (\log n) \E{|\overline{\U}|}.
\end{align*}
From Lemma~\ref{lem:q0precise} we immediately get $\E{|\overline{\U}|} \asymp n^{d-\alpha d}$, and hence the above bound is equal to $n^{d-\alpha d} \delta \log n$, which by the choice of $\delta$ is equal to
 \[
 n^{d-\alpha d} \cdot n^{\gamma(1-\epsilon)(2-d)/2+\psi} \cdot (\log n).
 \]
Since $\alpha>\frac34(d-\frac23)/(d-1)$, by taking $\epsilon$ and $\psi$ sufficiently small the quantity above becomes~$o(1)$ as $n\to\infty$ and this concludes the proof.
\end{proof}

\begin{lemma}\label{lem:b2quantity}
Let $x,y\in \Z_n^d$ and let $0<\zeta< \gamma(1-\epsilon)$. 

{\rm (a)} If $\norm{x-y}\leq n^\zeta$, then 
\[
\E{Q_xQ_y}\lesssim n^{-2\alpha d/(1+p_d) + o(1)}.
\]
{\rm (b)} If $ \norm{x-y}\geq n^\zeta$, then 
\[
\E{Q_xQ_y}\lesssim n^{-2\alpha d +o(1)}.
\]
\end{lemma}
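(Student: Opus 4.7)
Both bounds follow by extending the argument of Lemma~\ref{lem:q0precise} from one point to two. In each case I condition on an appropriate exit-point chain so that per-excursion non-hitting events become conditionally independent, bound each factor using either Lemma~\ref{lem:2pointestimate} (for (a)) or two applications of Lemma~\ref{lem:hittingprob} (for (b)), and then invoke the Lezaud concentration inequality~\cite{Lezaud} to pass from the conditional product to an exponential bound.

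For (a), since $\norm{x-y}\leq n^\zeta=o(r)$, both points lie in $\B(x,r)$. Work with the excursions around $x$ only, define
\[
g(u,v)=\prcond{\tau_x\wedge\tau_y>\til{\rho}_0^x}{X_{\til{\rho}_0^x}=v}{u},
\]
and let $\tilde Q$ be the indicator that neither $x$ nor $y$ is hit during the first $A$ excursions around $x$. Conditional independence of the excursions given the exit-point chain $(Y_i^x)_{i\geq 0}$ of the annulus around $x$ yields
\[
\E{\tilde Q\mid \sigma(Y_j^x:j\geq 1)}=\prod_{i=1}^{A}g(Y_{i-1}^x,Y_i^x),
\]
and Lemma~\ref{lem:2pointestimate} gives $-\log g(u,v)\geq \tfrac{2C_d}{(1+p_d)r^{d-2}}(1+o(1))$. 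Following the Lezaud argument from Lemma~\ref{lem:q0precise} verbatim (with Corollary~\ref{cor:jointprocess} providing the mixing input) then produces $\E{\tilde Q}\lesssim n^{-2\alpha d/(1+p_d)+o(1)}$. It remains to check $\E{Q_xQ_y}\leq \E{\tilde Q}+o(n^{-2\alpha d/(1+p_d)})$: on the high-probability event from Lemma~\ref{lem:n1} that $\sigma_x$ and $\sigma_y$ agree up to a factor $1+o(1)$, and that the annuli $\B(x,R)\setminus\B(x,r)$ and $\B(y,R)\setminus\B(y,r)$ nearly coincide (since $\norm{x-y}=o(r)$), the event $Q_y=1$ forces $y$ also not to be hit during the first $A$ excursions around $x$.

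For (b), the aim is to decouple $Q_x$ and $Q_y$. If $\norm{x-y}>3R$, the annuli around the two points are disjoint and the joint process of consecutive pairs of exit points $((Y_{i-1}^x,Y_i^x),(Y_{i-1}^y,Y_i^y))_i$ is a Markov chain that mixes in $O(1)$ steps, by a product extension of Lemma~\ref{lem:coupling} (between excursions around either centre the walk has ample time to mix globally, so the next pair of exit points is nearly independent of the past), with stationary law close to the product of the two single-centre stationary measures from Corollary~\ref{cor:jointprocess}. Applying Lezaud to
\[
\sum_{i=1}^{A}\bigl[-\log f(Y_{i-1}^x,Y_i^x)-\log f(Y_{i-1}^y,Y_i^y)\bigr],
\]
with $f$ the analogue of~\eqref{eq:defoff} centred at each of $x$ and $y$, then yields $\E{Q_xQ_y}\lesssim e^{-2mA(1-o(1))}=n^{-2\alpha d+o(1)}$. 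In the overlapping regime $n^\zeta\leq\norm{x-y}\leq 3R$, I would first replace the outer and inner radii by $R':=\norm{x-y}/4$ and $r':=(R')^{1-\epsilon}$, so the new annuli around $x$ and $y$ are disjoint; by Lemma~\ref{lem:n1} the walk completes enough excursions of the new annuli by time $\sigma_x$ for the per-excursion bound from Lemma~\ref{lem:hittingprob} at scale $r'$ to still produce the marginal estimate $n^{-\alpha d+o(1)}$, and the product-chain argument above closes the case.

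The main obstacle is the two decoupling reductions above: in (a), rigorously passing from $Q_xQ_y$ to the single-annulus joint non-hitting event $\tilde Q$ despite $\sigma_x$ and $\sigma_y$ being random and distinct; and in (b), establishing the near-independence of the two exit-point chains together with the passage to smaller disjoint annuli in the overlapping regime. Both steps rely crucially on the sharp concentration of excursion counts from Lemma~\ref{lem:n1}.
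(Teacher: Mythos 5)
Your overall architecture matches the paper's in outline (two-point estimate for close pairs, decoupling via disjoint annuli for distant pairs, excursion-count control via Lemma~\ref{lem:n1}), but both of your decoupling steps have real gaps, and in each case the paper's route is simpler than the machinery you invoke. In (a), your reduction from $Q_xQ_y$ to $\tilde Q$ is not correct as stated: concentration only gives that $\sigma_y$ and the time of the $A$-th excursion around $x$ agree up to a factor $1+o(1)$, not that $\sigma_y$ exceeds it, so $Q_y=1$ does \emph{not} force $y$ to be unhit during the first $A$ excursions around $x$. The fix is to work with a slightly smaller count $S=(1-o(1))A$: the paper introduces $E$, the number of $x$-excursions completed by time $\sigma_y$, bounds $\E{Q_xQ_y\1(E>S)}$ by the probability that neither point is hit in $S$ excursions (using Lemma~\ref{lem:2pointestimate} \emph{uniformly} in the entrance/exit points, so no Lezaud-type concentration is needed here), and controls $\pr{E\le S}$ by two applications of Lemma~\ref{lem:n1}; the loss from $A$ to $S$ is absorbed in the $n^{o(1)}$. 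You name this obstacle but do not close it.

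In (b), your main tool --- a ``product extension of Lemma~\ref{lem:coupling}'' asserting that the pair $((Y^x_{i-1},Y^x_i),(Y^y_{i-1},Y^y_i))_i$, indexed by a common excursion index, is a Markov chain mixing in $O(1)$ with near-product stationary law --- is neither proved nor available from the paper's toolbox, and is doubtful as stated: the $i$-th excursions around $x$ and around $y$ occur at unrelated times, so the pair process indexed by $i$ need not even be Markov, and feeding it into Lezaud also requires a synchronisation of the two excursion counts (the analogue of your $L_x,L_y\ge M$ reduction) in \emph{both} of your sub-cases, not just the overlapping one. None of this is needed: once one passes to annuli that are disjoint for all $\norm{x-y}\ge n^\zeta$ (the paper uses the single fixed scale $\B(x,n^{2\zeta/3})\setminus\B(x,n^{\zeta/3})$, avoiding your case split at $3R$), conditioning on the entrance and exit points of the first $M$ excursions of each annulus makes the two non-hitting events conditionally independent, and the bound of Lemma~\ref{lem:hittingprob}, being uniform over entrance/exit points, gives $\E{Q_xQ_y\1(L_x,L_y\ge M)}\le(1-C_d n^{-\zeta(d-2)/3}(1+o(1)))^{2M}=n^{-2\alpha d+o(1)}$ directly, with $\pr{L_x<M}$ handled as in (a). So the Lezaud/Corollary~\ref{cor:jointprocess} input, which is genuinely needed in Lemma~\ref{lem:q0precise} where two-sided estimates are required, is superfluous throughout this lemma, and relying on the unverified joint-chain structure is the one step of your plan that would not survive scrutiny without substantial extra work.
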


\begin{proof}[\bf Proof]
(a) Let $E$ be the number of excursions across $\B(x,R)\setminus \B(x,r)$ that the walk has completed by time $\sigma_y$. We set 
\[
S=\frac{1}{(1+\delta)}\cdot \frac{\alpha t_* (1-\delta)^2}{T_{r,R}}.
\]
Then we have
\begin{align}\label{eq:upperboundonqx}
	\E{Q_xQ_y} \leq \E{Q_xQ_y\1(E>S)} + \pr{E\leq S}.
\end{align}
Since $S<A$, we can bound $\E{Q_xQ_y\1(E>S)}$ from above
by the probability that neither~$x$ nor $y$ are hit in $S$ excursions across $\B(x,R)\setminus\B(x,r)$.
	Using Lemmas~\ref{lem:2pointestimate} and~\ref{lem:expreform} we obtain
	\begin{align*}
		\E{Q_xQ_y\1(E>S)} \leq \left(1 -\frac{2C_d}{(1+p_d)r^{d-2}}(1+o(1)) \right)^{S} \leq  n^{-2\alpha d/(1+p_d) +o(1)}.
	\end{align*}

	For the second term on the right hand side of~\eqref{eq:upperboundonqx} we have
	\begin{align}\label{eq:essmall}
	\begin{split}
		\pr{E\leq S}  &\leq \pr{E\leq S, \sigma_y\geq \alpha t_*(1-\delta)^2} + \pr{\sigma_y<\alpha t_*(1-\delta)^2}\\
&		\leq  \pr{N_x(r,R,\alpha t_*(1-\delta)^2)\leq S} + \pr{N_y(r,R,\alpha t_*(1-\delta)^2)\geq A}\\
&=o( n^{-2\alpha d/(1+p_d) +o(1)}),
\end{split}
	\end{align}
	where the last equality follows from Lemma~\ref{lem:n1}.
	
	(b)  Define $L_x$ to be the number of excursions across $\til{B}_x=\B(x,n^{2\zeta/3})\setminus \B(x,n^{\zeta/3})$ that the walk has completed by time $\sigma_x$ and analogously define $L_y$. Set $\til{\delta}=n^{\psi+(2-d)\zeta/6}$ for $\psi$ sufficiently small and 
	\[
	M= \frac{1}{1+\til{\delta}}\cdot \frac{\alpha t_*(1-\til{\delta})^2}{T_{n^{\zeta/3}, n^{2\zeta/3}}}.
	\]
	Let $\Fexc$ be the sigma algebra generated by the entrance and exit points of the first $M$ excursions across $\B(x,n^{2\zeta/3})\setminus \B(x,n^{\zeta/3})$ and the first $M$ excursions across $\B(y,n^{2\zeta/3})\setminus \B(y,n^{\zeta/3})$. Then 
	\begin{align*}
		\E{Q_xQ_y} \leq \E{Q_xQ_y \1(L_x, L_y\geq M)} + 2 \pr{L_x<M}.
	\end{align*}
	The term $\pr{L_x<M}$ can be controlled in exactly the same way as in~\eqref{eq:essmall}. To bound the first term appearing on the right hand side above we define the events
	\[
	F_x = \{ x \text{ is not hit in $M$ excursions across } \til{B}_x\}
	\]
	and $F_y$ analogously. 
	After conditioning on entrance and exit points of the excursions, the events of hitting the centres are independent. Hence using Lemma~\ref{lem:hittingprob} we obtain
	\begin{align*}
		\E{Q_xQ_y \1(L_x, L_y\geq M)} &\leq \E{\prcond{F_x}{\Fexc}{}\prcond{F_y}{\Fexc}{}}\\& \leq \left(1-\frac{C_d}{n^{\zeta(d-2)/3}}(1+o(1)) \right)^{2M} = n^{-2\alpha d + o(1)},
	\end{align*}
	where for the last equality we used that 
		\[
	M= \frac{\alpha}{C_d}\cdot \log (n^d) n^{\zeta(d-2)/3}\cdot (1+o(1)),
	\]
	which follows from \cite[Lemma~4.1]{JasonPerla} proved in the Appendix of~\cite{JasonPerla}.
	\end{proof}

We now have all the required ingredients to prove Theorem~\ref{thm:mainresult}.

\begin{proof}[\bf Proof of Theorem~\ref{thm:mainresult}]
	
	By Lemma~\ref{lem:comparison} it suffices to show that 
	\[
	\norm{\LL(\overline{\U}) - \nu_{\alpha,n}}_{\rm{TV}} = o(1).
	\]
	In order to do so, we are going to use the Chen--Stein method as in Theorem~\ref{thm:chenstein}. For every~$x$ we define its neighbourhood $\B_x=\B(x,100R)$, set  $p_x=p_{\alpha,n} = \E{Q_0}$ and $p_{xy}=\E{Q_xQ_y}$ for~$x\neq y$. We now need to bound the terms $b_1, b_2$ and $b_3$.
	
Lemma~\ref{lem:q0precise} now gives as $n\to\infty$
	\begin{align*}
b_1= \sum_x \sum_{y\in \B_x\setminus \{x\}} p_x p_y \asymp n^d n^{\gamma d} n^{-2\alpha d} = n^{-\epsilon d} = o(1).
\end{align*}
Regarding the quantity $b_2$ we use Lemma~\ref{lem:b2quantity} to obtain 	
\begin{align*}
b_2 &= \sum_x \sum_{y: \norm{y-x}\leq n^\zeta} \E{Q_xQ_y} + \sum_x \sum_{y: n^\zeta\leq \norm{y-x}\leq n^\gamma} \E{Q_xQ_y} \\
&\asymp  n^d \cdot n^{\zeta d} \cdot n^{-2\alpha d/(1+p_d)+o(1)} + n^d \cdot n^{\gamma d} \cdot n^{-2\alpha d+o(1)}.
\end{align*}
Choosing $\zeta<2\alpha/(1+p_d) -1$ (recall that $\alpha >(1+p_d)/2)$) and since $\gamma = 2\alpha -1-\epsilon$, 
we get~$b_2 = o(1)$.
	We finally turn our attention to the quantity $b_3$. By transitivity, we have 
\[
b_3 = n^d\E{\left|\econd{Q_0 - p_0}{Q_y, y \notin \B_0} \right|}.
\]
We let $\F_{\rm{exc}}$ be the sigma algebra generated by the exit points of the first $A$ excursions across the annulus $\B(0,n^\gamma)\setminus \B(0,n^{\gamma(1-\epsilon)})$. We write $\F_{\rm{out}}$ for the sigma algebra generated by $\{Q_y, y\notin \B_0\}$. Then by the tower property we have 
\begin{align*}
	\econd{Q_0}{\Fout} = \econd{\econd{Q_0}{\sigma(\Fout,\Fexc)}}{\Fout} = \econd{\econd{Q_0}{\Fexc}}{\Fout},
\end{align*}
where for the last equality we used that $Q_0$ depends on $\Fout$ only through $\Fexc$ which follows from the fact that the annuli are disjoint by the choice of the radii. 
Using the same notation as in Lemma~\ref{lem:coupling}, we let
$Y_i$ be the exit point of the $i$-th excursion. Then 
\[\textstyle
\econd{Q_0}{\Fexc} = \prod_{i=1}^{A} f(Y_{i-1},Y_i),
\]
where $f$ was defined in~\eqref{eq:defoff}.
We then get  
\[\textstyle
\log \econd{Q_0}{\Fexc} = \sum_{i=1}^{A} \log f(Y_{i-1},Y_i).
\]
Let $\eta=n^{-\gamma(1-\epsilon)(d-2)/2}\log n$  and set $F$ to be the event 
\[
F=\left\{ \econd{Q_0}{\Fexc} \in \left( \exp(-m(1+\eta)A), \exp(-m(1-\eta)A) \right)\right\},
\]
where we recall from Lemma~\ref{lem:expreform} that 
\[
m=C_d\cdot n^{-\gamma(1-\epsilon)(d-2)}(1+O(n^{-\gamma \epsilon})).
\]
Then we obtain
\begin{align*}
	&\E{\left|\econd{\econd{Q_0}{\Fexc}}{\Fout} - \E{\econd{Q_0}{\Fexc}} \right|} \\
	&\leq  \E{\left|\econd{\econd{Q_0}{\Fexc}\1(F)}{\Fout} - \E{\econd{Q_0}{\Fexc}\1(F)}  \right|} +2\pr{F^c}\\
	&\leq   \exp(-m(1-\eta)A) - \exp(-m(1+\eta) A) + 2\pr{F^c}\\& 
	\lesssim e^{-m A}(e^{m \eta A} - e^{-m \eta A}) + R^{d-1} \exp\left(-c\eta^2 A \right),
\end{align*}
where the last inequality follows from Lemma~\ref{lem:q0precise}. Using~\eqref{eq:defoftstar} next gives $e^{-m A} \asymp n^{-\alpha d}$. Also using that $A\asymp n^{\gamma(1-\epsilon)(d-2)}\log n$ we obtain
\[
e^{m \eta A} - e^{-m \eta A}\asymp m \eta A \asymp n^{-\gamma(1-\epsilon) (d-2)/2} \cdot  (\log n)^2.
\]
So overall we obtain
\[
b_3\lesssim n^d n^{-\alpha d}n^{-\gamma (1-\epsilon)(d-2)/2}  (\log n)^2
\]
Substituting the value of $\gamma =2\alpha -1 - \epsilon$ we see that taking $\alpha >\frac34(d-\frac23)/(d-1)$ and $\epsilon>0$ sufficiently small gives~$b_3=o(1)$. This concludes the proof.
\end{proof}

\section*{Acknowledgements}

We thank Jason Miller and Ofer Zeitouni for helpful discussions. We are also grateful to the referee for their careful reading and comments. This work was supported by the Engineering and Physical Sciences Research Council:
	SOT by Doctoral Training Grant 1885554
and
	PS by EP/R022615/1.

\makeatletter
\def\@rst #1 #2other{#1}
\renewcommand\MR[1]{\relax\ifhmode\unskip\spacefactor3000 \space\fi
  \MRhref{\expandafter\@rst #1 other}{#1}}
\newcommand{\MRhref}[2]{\href{http://www.ams.org/mathscinet-getitem?mr=#1}{MR#2}}
\makeatother

\phantomsection
\bibliographystyle{hmralpha}
\bibliography{biblio}

\end{document}